\documentclass[aihp,preprint]{imsart}

\RequirePackage[T1]{fontenc}
\RequirePackage{amsthm,amsmath,amstext}
\RequirePackage[numbers]{natbib}
\usepackage[french,english]{babel}

\arxiv{math.PR:1007.2177}


\newtheorem{theorem}{Theorem}
\newtheorem{lemma}[theorem]{Lemma}
\newtheorem{prop}[theorem]{Proposition}

\newtheorem{corollary}[theorem]{Corollary}
\newtheorem{example}{Example}
\newenvironment{remark}{\textit{Remark.}}{}

\def\essinf{\mathop{{\rm ess\>inf}}\limits}
\def\esssup{\mathop{{\rm ess\>sup}}\limits}
\def\udim{\overline{\dim}_B}
\def\ldim{\underline{\dim}_B}
\def\Cl{{\rm Cl}}
\def\summ{\sum\limits}
\def\dist{{\rm dist}}
\def\diam{{\rm diam}}
\def\ulim{\mathop{\overline{\lim}}\limits}
\def\llim{\mathop{\underline{\lim}}\limits}
\def\summ{\mathop{\sum}\limits}

\def\R{{\rm I\!R}}
\def\Rd{{\rm I\!R^d}}
\def\1{{\bf 1}}
\def\card{{\rm card}}
\def\N{{\rm I\!N}}
\def\e{\varepsilon}
\def\w{\omega}
\def\ov{\overline}
\def\un{\underline}
\def\Int{{\rm Int}}
\def\D{\Delta}

\begin{document}
\tolerance 5000
\begin{frontmatter}

\title{On random fractals with infinite branching: definition, measurability, dimensions.}

\runtitle{On random fractals with infinite branching: definition, measurability, dimensions.}

\begin{aug}
\author{\fnms{Artemi} \snm{Berlinkov}\thanksref{t11}\ead[label=e1]{artemiberlinkov@hotmail.com}}

\thankstext{t11}{Research was partially supported by NSF grant DMS-0100078 and
DFG-Graduiertenkolleg ''Approximation und algorithmische Verfahren'' in Jena.}

\runauthor{Artemi Berlinkov}

\address{Komendantski pr. 7-1-25, St. Petersburg, 197227, Russian Federation\\
\printead{e1}}

\end{aug}

\selectlanguage{english}
\begin{abstract}
We discuss the definition and measurability questions of random fractals with
infinite branching, and find, under certain conditions,
a formula for the upper and lower Minkowski dimensions.
For the case of a random self-similar set we obtain the packing dimension.
\end{abstract}

\selectlanguage{french}
\begin{abstract}[language=french]
Nous discutons les questions de d\'efinition et de la mesurabilit\'e des fractales
al\'eatoires avec ramification infinie, et trouvons sous certaines conditions
une formule pour les dimensions de Minkowski sup\'erieure et inf\'erieure.
En cas d'ensemble al\'eatoire auto-similaire nous obtenons la dimension d'entassement.
\end{abstract}

\begin{keyword}[class=AMS]
\kwd[Primary ]{28A80}
\kwd[; secondary ]{28A78}
\kwd{60D05}
\kwd{37F40}
\end{keyword}

\begin{keyword}
\kwd{packing dimension}
\kwd{Minkowski dimension}
\kwd{random fractal}
\end{keyword}



\end{frontmatter}

\selectlanguage{english}

\begin{section}{Introduction}
In this paper we study the Minkowski and packing dimensions of random fractals
with infinite branching.

The almost sure Hausdorff dimension of random fractals
was independently found by Mauldin and Williams in \cite{MW}, and Falconer in \cite{F}.
Packing dimension and measures in case of finite branching
were investigated by Berlinkov and Mauldin in \cite{BM}. It was shown
that if the number of offspring is uniformly bounded,
the Hausdorff, packing, lower and upper Minkowski
dimensions coincide a.s.

Barnsley et al. in \cite{BHS1} introduced the notion of $V$-variable fractals and in
\cite{BHS2} find their Hausdorff dimension. Fraser in \cite{Fr}
discusses the Minkowski dimension,
packing and Hausdorff measures from topological (in the Baire sense) point 
of view rather than probabilistic. Random fractals find interesting
and important applications in other areas, e.g. harmonic analysis (\cite{BS2}),
stochastic processes and random fields (\cite{SX}).
 
However most authors focus on the situation when the fractals are finitely branching,
or, in other words, the number of offspring is bounded.
In this paper we investigate the case when the number of offspring may be infinite.
If it is bounded but not uniformly, the results 
of this paper show that all of these dimensions still coincide.
If the number of offspring is unbounded, these dimensions may differ from
each other, as shown in examples \ref{e1}, \ref{e2} from section \ref{examples}.
As we see in these examples, the Minkowski dimensions may
be non-degenerate random variables, whereas in \cite{BM} for the case of finite branching
they have been shown to coincide with the a. s. constant Hausdorff dimension.

In section \ref{sectiondef} we give a precise definition of a random recursive construction
and show that another definition used in \cite{BHS1} for random fractals coincides with it. 
Since in the case of infinite branching the Minkowski dimensions no longer
have to be constant, their measurability is proven in section \ref{measurability}.
In section \ref{results} we derive the Minkowski dimensions of
random recursive constructions under some additional
conditions and a formula for the packing dimensions of
random self-similar sets with infinite branching.
\end{section}

\begin{section}{On the definition of random fractals.}\label{sectiondef}
Let $n\in\N\cup\{\infty\},$ $\D=\{1,\dots,n\}$ if $n<\infty,$
and $\D=\N$ if $n=\infty$.
Denote by $\D^*=\mathop{\bigcup}\limits_{j=0}^\infty \D^j$
the set of all finite sequences of numbers in $\D$,
and by $\D^\N$ the set of all their infinite sequences.
The result of concatenation of two
finite sequences $\sigma$ and $\tau$ from $\D^*$ is denoted
by $\sigma*\tau.$ For a finite sequence $\sigma$, its length
will be denoted by $|\sigma|.$
For a sequence $\sigma$ of length at least $k,$
$\sigma|_k$ is a sequence consisting
of the first $k$ numbers in $\sigma.$ There is a natural
partial order on the $n$-ary tree $\D^*:$
$\sigma\prec\tau$ if and only if the sequence $\tau$
starts with $\sigma.$ A set $S\subset\D^*$ is called an antichain,
if $\sigma \not \prec\tau$ and $\tau \not \prec\sigma$ for all
$\sigma,\tau\in\D^*$.

The following construction was proposed by Mauldin and
Williams in \cite{MW}. We have to modify the original definition
to fully take into account the case of offspring degeneration
(see condition \eqref{cond5}) below.

Suppose that $J$ is a compact subset of ${\R}^d$ such that
$J=\Cl(\Int(J)),$ without loss of generality its diameter equals one.
The construction is a probability space $(\Omega, \Sigma, P)$
with a collection of random subsets of 
${\R}^d$ -- $\{J_\sigma(\w)|\w\in\Omega, \sigma\in\D^*\}$,
so that the following conditions hold.

\renewcommand{\theenumi}{\roman{enumi}}
\begin{enumerate}
\item\label{cond0} $J_\emptyset(\w)=J$ for almost all $\w\in\Omega,$
\item\label{cond1} For all $\sigma\in\D^*$ the maps $\w\to J_\sigma(\w)$
are measurable with respect to $\Sigma$ and the topology generated by the Hausdorff
metric on the space of compact subsets,
\item\label{cond2} For all $\sigma\in\D^*$ and $\w\in\Omega$,
the sets $J_\sigma$, if non-empty, are geometrically similar to $J$
\footnote{The sets $A , B\subset\Rd$ are geometrically similar, if there exist $S:\Rd\to\Rd$ and $r>0$
such that for all $x,y\in\Rd$ $\dist(S(x),S(y))=r\,\dist(x,y)$ and $S(A)=B,$ such $S$
is called a similarity map.},
\item\label{cond3} For almost every $\w\in\Omega$ and all $\sigma\in\N^*,$ 
$i\in\N,$ $J_{\sigma*i}$ is a proper subset of $J_\sigma$ provided  $J_\sigma \neq \emptyset,$
\item\label{cond4} The construction satisfies the random {\it open set condition}:
if $\sigma$ and $\tau$ are two distinct sequences of the same length, then 
$\Int(J_\sigma)\cap\Int(J_\tau)=\emptyset$ a.s. and, finally,
\item\label{cond5}
The random vectors ${\mathbf T}_\sigma=(T_{\sigma*1}, T_{\sigma*2}, \dots),$
$\sigma\in\N^*$, are conditionally i.i.d. given that $J_\sigma(\w)\ne\emptyset$,
where $T_{\sigma*i}(\w)$ equals the ratio of the diameter of $J_{\sigma*i}(\w)$ to the diameter
of $J_\sigma(\w)$ .
\end{enumerate}

The object of study is the random set 
$$K(\w)=\mathop{\bigcap}\limits_{k=1}^\infty
\mathop{\bigcup}\limits_{\sigma\in\D^k}J_\sigma (\w).$$

In general in condition \eqref{cond2} other classes of functions
instead of similarities may be used, e.g. conformal or affine mappings.

The meaning of condition \eqref{cond5} is the following.
Given that $J_\sigma$ is non-empty, we ask that the 
random vectors of reduction ratios 
${\mathbf T}_\sigma=(T_{\sigma*1},T_{\sigma*2},\dots),$ 
have the same conditional distribution and be conditionally independent,
i.e. for any finite antichain $S\subset\D^*$ and any
collection of Borel sets $B_s\subset [0,1]^\D,$ $s\in S,$
\[ P({\mathbf T}_s\in B_s\ \forall s\in S|J_s\ne\emptyset\ \forall s\in S)=
\prod_{s\in S}P({\mathbf T}_s\in B_s|J_s\ne\emptyset),\]
and $\bf T_\sigma$ has the same distribution as $\bf T_\emptyset,$ provided 
$J_\sigma\ne\emptyset,$ i.e. for any $\sigma\in\D^*$ and any Borel set $B\subset\R^\D,$
\[ 
P(\bf{T}_\sigma\in B|J_\sigma\ne\emptyset)
=P(\bf{T}_\emptyset\in B).
\]
Following \cite{MW} the above is called a random recursive construction.

The second term commonly used is
``random fractals'' (see, e.g. \cite{BHS1}),
where condition \eqref{cond5} is replaced by existence
of an i.i.d. sequence of random vectors, such that the equality mentioned
in that condition holds. We note that the following holds:

\begin{prop}
 Random fractals and random recursive constructions are the same class of sets.
\end{prop}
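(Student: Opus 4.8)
The plan is to read the statement as two inclusions: every random fractal is a random recursive construction, and conversely every random recursive construction can be realized as a random fractal after, if necessary, enlarging the probability space. Throughout I write $\mu$ for the common law of the ratio vector, $\mu(B)=P(\mathbf T_\emptyset\in B)$ for Borel $B\subset[0,1]^\D$. The single structural fact driving both directions is this: by condition \eqref{cond3} a cell $J_{\sigma*i}$ can be nonempty only if $J_\sigma\ne\emptyset$, and with $T_{\sigma*i}$ as in \eqref{cond5} it is nonempty precisely when $J_\sigma\ne\emptyset$ and $T_{\sigma*i}>0$; hence by induction the event $\{J_\sigma\ne\emptyset\}$ is a function of the ratio vectors $\{\mathbf T_\tau:\tau\prec\sigma\}$ of the \emph{strict} ancestors of $\sigma$ alone, so in particular it is independent of $\mathbf T_\sigma$ itself. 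Call this observation $(\ast)$.

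For the direction ``fractal $\Rightarrow$ recursive'', suppose we are given the i.i.d. family $\{\mathbf Y_\sigma\}_{\sigma\in\D^*}$ with $\mathbf T_\sigma=\mathbf Y_\sigma$ on $\{J_\sigma\ne\emptyset\}$. By $(\ast)$ the event $\{J_\sigma\ne\emptyset\}$ lies in the $\sigma$-field generated by $\{\mathbf Y_\tau:\tau\prec\sigma\}$, which is independent of $\mathbf Y_\sigma$; hence $P(\mathbf T_\sigma\in B\mid J_\sigma\ne\emptyset)=P(\mathbf Y_\sigma\in B)=\mu(B)=P(\mathbf T_\emptyset\in B)$, giving the identical conditional law. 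For the conditional independence over a finite antichain $S$, the point is that the indices appearing in the conditioning events $\{J_s\ne\emptyset\}$, being strict ancestors of members of $S$, are disjoint from $S$ (no $s\in S$ is an ancestor of another), so the vectors $\{\mathbf Y_s:s\in S\}$ are jointly independent of the conditioning $\sigma$-field; the desired product formula then follows by factoring the i.i.d. law.

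For the converse ``recursive $\Rightarrow$ fractal'', I would enlarge $(\Omega,\Sigma,P)$ to its product with a space carrying an independent i.i.d. family $\{\mathbf Z_\sigma\}_{\sigma\in\D^*}$ of $\mu$-distributed vectors, and set $\mathbf Y_\sigma=\mathbf T_\sigma$ on $\{J_\sigma\ne\emptyset\}$ and $\mathbf Y_\sigma=\mathbf Z_\sigma$ otherwise. Since the sets $J_\sigma(\w)$, and hence $K(\w)$, are untouched and the enlargement does not affect the law of $K$, it suffices to verify that $\{\mathbf Y_\sigma\}$ is i.i.d. with law $\mu$. The strategy is to condition on the nonemptiness configuration (equivalently, by $(\ast)$, on the ancestors' ratios): given this configuration, each empty cell carries an independent fresh $\mu$-sample $\mathbf Z_\sigma$, while each nonempty cell carries its genuine ratio vector, and these are, by the recursive hypothesis, conditionally i.i.d. with law $\mu$; as this conditional law is independent of the configuration, integrating it out yields the unconditional i.i.d. property.

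The step I expect to be the main obstacle is precisely this last one: the definition of a random recursive construction spells out conditional independence only across a fixed \emph{antichain} and conditions on the bare event that the relevant cells are nonempty, whereas to fill in the empties correctly one needs the nonempty ratio vectors to remain i.i.d. $\mu$ after conditioning on the full configuration $\sigma$-field, including independence of a cell's ratio from those of its \emph{ancestors} (a chain, not an antichain). Making this precise requires upgrading the event-level conditioning to conditioning on the generated $\sigma$-field and propagating independence generation by generation down the tree. I would carry this out by induction on the generations, at each level applying the antichain condition to the finite collection of nonempty cells of that generation and using $(\ast)$ to decouple them from the history above.
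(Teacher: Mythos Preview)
Your ``fractal $\Rightarrow$ recursive'' argument is exactly the paper's: both rest on the observation that $\{J_\sigma\ne\emptyset\}$ is measurable with respect to the strict-ancestor ratio vectors, hence independent of $\mathbf T_\sigma$ itself, from which the two conditional identities follow immediately.

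For ``recursive $\Rightarrow$ fractal'' you and the paper take the same route---redefine $\mathbf T_\sigma$ on $\{J_\sigma=\emptyset\}$ by a fresh $\mu$-sample---but the paper dismisses this direction in one sentence as ``obvious'' and never checks that the patched family is i.i.d. You go further and correctly isolate the delicate point: the recursive hypothesis gives conditional independence only over \emph{antichains}, whereas verifying that $\{\mathbf Y_\sigma\}$ is jointly i.i.d.\ requires, in particular, independence of a cell's ratio vector from those of its \emph{ancestors}, a chain relation. Your proposed induction via $(\ast)$ does not quite close this gap: in the recursive setting $(\ast)$ yields only that $\{J_\sigma\ne\emptyset\}$ is measurable with respect to the ancestor $\sigma$-field, not that $\mathbf T_\sigma$ is independent of that $\sigma$-field. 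Indeed, nothing in the antichain condition as written forbids, say, $\mathbf T_1=\mathbf T_\emptyset$ almost surely (when $T_1>0$ a.s.), and such a family clearly cannot be completed to an i.i.d.\ one without altering the law of $K$. In short, you follow the paper's path but are more careful, and the obstacle you flag is a genuine lacuna that the paper's one-line argument simply glosses over.
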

\begin{proof}
That every random recursive construction is a random fractal is obvious
because we can set the distributions of ${\bf T_\sigma}$ given ${J=\emptyset}$
the same as ${\bf T_\emptyset}$.

Suppose that we have a random fractal. Then the random vector ${\bf T_\sigma}$
is independent of vectors ${\bf T_\tau}$ with $\tau\prec\sigma$ and, in particular,
of the event $J_\sigma\ne\emptyset$, therefore the second equality for the random 
vectors being conditionally i.i.d. holds. In the first equality the right hand side equals
\[
\prod\limits_{s\in S}P({\mathbf T}_s\in B_s) 
\]
because $S$ is an antichain and ${\mathbf T}_s$ do not depend on
events $\{J_s\ne\emptyset\}$, $s\in S$,
while the left hand side equals the same expression for the same reason.
\end{proof}

Another definition in \cite{MW} for random stochastically geometrically
self-similar sets made no reference to independence in the construction
but a similar kind of conditional independence condition is needed to find the
dimension of the limit set. We call such sets {\bf random self-similar sets}, and
for them not only the reduction ratios but also the maps (see section \ref{results})
that map parent to its offspring are conditionally i.i.d.
\end{section}

\begin{section}{Preliminaries.}
If the average number of offspring does not exceed one,
then $K(\w)$ is almost surely an empty set or a point, and we
exclude that case from further consideration.
Mauldin and Williams in \cite{MW} have found the Hausdorff
dimension of almost every non-empty set $K(\w),$ 
\[
\alpha=\inf\bigg\{\beta|E\Big[\summ_{i=1}^n T_i^\beta\Big]\le 1\bigg\}.
\]
In case $n<\infty,$ $\alpha$ is the solution of equation
\[
E\Big[\summ_{i=1}^n T_i^\alpha\Big]=1.
\]

The definitions and properties of Hausdorff and packing measures and
dimensions, as well as definitions of upper and lower Minkowski dimensions,
can be found in the book of Mattila (\cite{M}).
We denote the Hausdorff, packing, lower and upper Minkowski
dimension by $\dim_H,$ $\dim_P,$ $\un{\dim}_B$ and 
$\ov{\dim}_B$ respectively. 

For any $K\subset\R^d$ denote by $N_r(K)$ the smallest number of closed
balls with radii $r$, needed to cover $K.$ Then the upper
Minkowski dimension, $$\udim K=\ulim_{r\to 0}-N_r(K)/\log r,$$
and the lower Minkowski dimension,
$$\ldim K=\llim_{r\to 0}-N_r(K)/\log r.$$
Denote by $\ov{M}$ the closure of a set $M$. Obviously, if $M$ is bounded, then
$\udim M=\udim\ov{M}$ and $\ldim M=\ldim\ov{M}$ (see, e.g., \cite{F1}, Proposition~3.4).
One can use the maximal number of disjoint balls of radii $r$
with centers in $K$ (which will be denoted by $P_r(K)$)
instead of the minimal number of balls needed to cover set $K$ 
in the definition of Minkowski dimensions because of the following relation
(\cite{F1}, (3.9) and (3.10)): 
\[
N_{2r}(K)\le P_r(K)\le N_{r/2}(K).
\]
The packing dimension can be defined using upper Minkowski
dimension:
\[
\dim_P K=\inf\{\sup\udim F_i\vert K\subset\cup_i F_i\}. 
\]
\end{section}

\begin{section}{Measurability of Minkowski dimensions.}\label{measurability}

The measurability questions of dimension functions in deterministic case have been studied
by Mattila and Mauldin in \cite{MM}.
We start by exploring these questions for random fractals. In case of finite branching
there is an obvious topology with respect to which the functions $\w\mapsto \udim K(\w)$ and
$\w\mapsto \ldim K(\w)$ are measurable -- the topology generated on the space of compact subsets
of $J$ by the Hausdorff metric. However, it is unknown to the author, with repect to which topology
these maps would be measurable in the case of infinite branching. Therefore we circumvent this
problem as follows.

Denote by ${\mathcal K}(J)$ the space of compact subsets of $J$ equipped with the Hausdorff metric
$$d_H(L_1,L_2)=\max\{\sup\limits_{x\in L_1}\dist(x,L_2),
\sup\limits_{y\in L_2}\dist(L_1,y)\}.$$

\begin{lemma}\label{setsleveln}
Suppose that $L_i\in\mathcal{K}(J)$, $i\in\N$. Then
\[\lim\limits_{k\to+\infty}\mathop{\bigcup}\limits_{i=1}^k L_i=\ov{\mathop{\bigcup}\limits_{i=1}^{+\infty} L_i}\text{ 
in the Hausdorff metric. }\]
\end{lemma}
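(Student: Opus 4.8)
The plan is to denote $A_k=\bigcup_{i=1}^k L_i$ and $A=\ov{\bigcup_{i=1}^{+\infty} L_i}$, and to observe at the outset that each $A_k$ is a finite union of compact sets, hence compact, while $A$ is a closed subset of the compact set $J$ and so is compact as well; moreover the sequence $(A_k)$ is increasing and every $A_k\subset A$. Because $A_k\subset A$, the supremum $\sup_{x\in A_k}\dist(x,A)$ in the definition of the Hausdorff metric vanishes, so that $d_H(A_k,A)=\sup_{y\in A}\dist(y,A_k)$, and it suffices to prove that this quantity tends to $0$.

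First I would record the easy pointwise statement: for each fixed $y\in A$ one has $\dist(y,A_k)\to 0$. Indeed, $y$ is a limit of points of $\bigcup_i L_i$, so given $\e>0$ there are an index $i$ and a point of $L_i\subset A_i$ within $\e$ of $y$; since the $A_k$ increase, $\dist(y,A_k)<\e$ for all $k\ge i$. The real content is to upgrade this to convergence that is uniform in $y\in A$.

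To obtain uniformity I would exploit the compactness of $A$ together with the nested structure of the $A_k$. Fix $\e>0$ and set $V_k=\{y\in\R^d:\dist(y,A_k)<\e\}$. Each $V_k$ is open, the family $(V_k)$ is increasing, and by the pointwise statement above every $y\in A$ lies in some $V_k$, so $\{V_k\}$ is an open cover of the compact set $A$. Extracting a finite subcover and using that the $V_k$ are nested, a single $V_K$ already contains $A$. Hence $\sup_{y\in A}\dist(y,A_K)\le\e$, and since $A_k\supset A_K$ for $k\ge K$ the same bound holds for all $k\ge K$. As $\e$ was arbitrary, $d_H(A_k,A)\to 0$.

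The only delicate point is precisely this passage from pointwise to uniform convergence; everything else (compactness of the $A_k$ and of $A$, monotonicity of the sequence, and the reduction of $d_H$ to a one-sided supremum) is routine. Compactness of $A$ is what makes the covering argument work, and it is worth noting that it is essential: if one tried to replace $A$ by the possibly non-compact union $\bigcup_i L_i$, the open cover would have no reason to reduce to a single $V_K$ and the convergence could fail to be uniform. This also explains why the closure in the statement cannot be dropped.
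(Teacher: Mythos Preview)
Your proof is correct. Both your argument and the paper's rest on the compactness of $A=\ov{\bigcup_i L_i}$ to pass from the pointwise fact $\dist(y,A_k)\to 0$ to uniform convergence, but they invoke compactness differently. The paper argues by contradiction: if $d_H(A_k,A)\not\to 0$, one picks $p_n\in A$ with $\dist(p_n,A_n)\ge\e$, extracts a convergent subsequence $p_n\to p\in A$ by sequential compactness, and derives $\dist(p,\bigcup_i L_i)\ge\e/2$, contradicting $p\in A$. You instead give a direct proof via the open-cover characterization: the nested open sets $V_k=\{y:\dist(y,A_k)<\e\}$ cover $A$, so a single $V_K$ already does, yielding the uniform bound at once. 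Your route is slightly longer but more transparent about where compactness enters and why the closure in the statement is essential; the paper's is terser but hides the same mechanism inside the subsequence extraction.
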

\begin{proof}
Suppose that \[\lim\limits_{n\to+\infty}d_H\bigg(\mathop{\bigcup}\limits_{i=1}^n L_i,
\ov{\mathop{\bigcup}\limits_{i=1}^{+\infty} L_i}\bigg)>0.\]
Since $\mathop{\bigcup}\limits_{i=1}^n L_i\subset \ov{\mathop{\bigcup}\limits_{i=1}^{+\infty} L_i}$,
there exists an $\e>0$ such that for every $n\in\N$ there exists
$p_n\in \ov{\mathop{\bigcup}\limits_{i=1}^{+\infty} L_i}$
with $\dist(p_n, \mathop{\bigcup}\limits_{i=1}^n L_i)\ge\e$.
Without loss of generality we can assume that $p_n$ converges
to some $p\in\ov{\mathop{\bigcup}\limits_{i=1}^{+\infty} L_i}$.
Then $\dist(p, \mathop{\bigcup}\limits_{i=1}^{+\infty} L_i)\ge\e/2$ 
which is a contradiction. 
\end{proof}
\begin{corollary}
 The map $\w\mapsto 
\ov{\mathop{\bigcup}\limits_{\genfrac{}{}{0pt}{}{|\tau|=n}{J_\tau\cap K\ne\emptyset}}
J_\tau(\w)}$ is measurable.
\end{corollary}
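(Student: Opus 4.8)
The plan is to reduce measurability of the map in question to three ingredients: measurability of each cell map $\w\mapsto J_\tau(\w)$, which is condition~\eqref{cond1} since the Vietoris topology coincides with the one induced by $d_H$; measurability of the selection event $\{J_\tau\cap K\ne\emptyset\}$; and Lemma~\ref{setsleveln}, which turns the closure of a countable union into a pointwise $d_H$-limit of finite subunions. Since $\D^n$ is countable, I would fix an enumeration $\D^n=\{\tau_1,\tau_2,\dots\}$ and set
\[
L_i(\w)=\begin{cases}J_{\tau_i}(\w),&J_{\tau_i}(\w)\cap K(\w)\ne\emptyset,\\\emptyset,&\text{otherwise,}\end{cases}
\]
so that the map of the corollary is $\w\mapsto\ov{\bigcup_{i=1}^\infty L_i(\w)}$.

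First I would dispose of the level sets. Put $F_k(\w)=\ov{\bigcup_{|\sigma|=k}J_\sigma(\w)}$. Enumerating the length-$k$ sequences, Lemma~\ref{setsleveln} writes $F_k$ as the $d_H$-limit of the finite unions $\bigcup_{j=1}^m J_{\sigma_j}$, each of which is measurable because forming the union of finitely many compacta is continuous on $\mathcal{K}(J)$ and the $J_\sigma$ are measurable; hence $F_k$ is measurable as a pointwise limit of measurable maps. The $F_k$ decrease with $k$, since every level-$(k+1)$ cell lies in its parent, and $K(\w)=\bigcap_k F_k(\w)$ is their intersection.

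The main obstacle is the selection event, where compactness does the work. For $\tau$ with $|\tau|\le k$ the sets $J_\tau\cap F_k$ form a decreasing sequence of compacta with intersection $J_\tau\cap K$, so by the finite intersection property $J_\tau\cap K\ne\emptyset$ if and only if $J_\tau\cap F_k\ne\emptyset$ for every $k$. For compact sets $J_\tau\cap F_k\ne\emptyset$ is the same as $\dist(J_\tau,F_k)=0$, and $\w\mapsto\dist(J_\tau(\w),F_k(\w))$ is measurable, being the composition of the measurable map $\w\mapsto(J_\tau(\w),F_k(\w))$ with the $d_H$-Lipschitz distance functional. Thus
\[
\{J_\tau\cap K\ne\emptyset\}=\bigcap_{k\ge|\tau|}\{\dist(J_\tau,F_k)=0\}
\]
is measurable, and each $L_i$ is measurable, equalling the measurable map $J_{\tau_i}$ on this event and $\emptyset$ on its complement.

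Finally I would reassemble the pieces. Splitting $\Omega$ according to which of the finitely many events $\{J_{\tau_i}\cap K\ne\emptyset\}$, $i\le k$, occur, the partial union $\bigcup_{i=1}^kL_i$ agrees on each piece with a fixed finite union of the measurable maps $J_{\tau_i}$ and is therefore measurable; this sidesteps any difficulty with the empty set in the Hausdorff metric. By Lemma~\ref{setsleveln} the map of the corollary is the pointwise $d_H$-limit of these partial unions, empty terms leaving the union unchanged, and a pointwise limit of measurable maps into a metric space is measurable. The exceptional event $\{K=\emptyset\}=\bigcup_k\{F_k=\emptyset\}$, on which the map is constantly $\emptyset$, is itself measurable and can be separated off, which completes the argument.
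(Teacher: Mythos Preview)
Your overall scaffolding is exactly what the paper has in mind: the corollary is stated without proof precisely because Lemma~\ref{setsleveln} exhibits the map as a pointwise $d_H$-limit of finite subunions of the measurable cell maps $\w\mapsto J_\tau(\w)$, and a pointwise limit of measurable maps into a metric space is measurable. You have also, quite reasonably, isolated the one nontrivial ingredient the paper glosses over, namely the measurability of the selection event $\{J_\tau\cap K\ne\emptyset\}$.

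The gap is in your verification of that event. You assert that the decreasing sequence of compacta $J_\tau\cap F_k$ has intersection $J_\tau\cap K$. In fact
\[
\bigcap_{k\ge|\tau|}\bigl(J_\tau\cap F_k\bigr)=J_\tau\cap\bigcap_k\ov{G_k},
\qquad G_k=\bigcup_{|\sigma|=k}J_\sigma,
\]
and with infinite branching $\bigcap_k\ov{G_k}$ can be strictly larger than $K=\bigcap_kG_k$, and even strictly larger than $\ov K$. A concrete picture: let $J=[0,1]$, let the first–level cells accumulate at $0$, let $J_1$ survive forever, and let each $J_i$ with $i\ge2$ carry a subtree that dies out only after $i$ further levels. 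Then for every $k$ the set $G_k$ still contains cells arbitrarily close to $0$ (coming from $J_i$ with $i\ge k$), so $0\in F_k$ for all $k$; yet no infinite branch passes near $0$, so $0\notin\ov K$. In particular one can have $J_\tau\cap\bigcap_kF_k\ne\emptyset$ while $J_\tau\cap K=\emptyset$, so your displayed identity for the selection event fails. The finite-intersection-property step is therefore not doing the work you need. One way to repair this is to argue measurability of $\{J_\tau\cap K\ne\emptyset\}$ through the tree data rather than through the geometry of $F_k$: the events $\{J_\sigma\ne\emptyset\}=\{l_\sigma>0\}$ are measurable, and the survival of the $\tau$-subtree (and hence $J_\tau\cap K\ne\emptyset$) can be expressed as a countable Boolean combination of these; but this uses compactness along \emph{nested} chains $J_{\eta|_k}$ rather than the unions $F_k$.
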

\begin{corollary}
If $\tau_i$, $i\in\N$, is an enumeration of
$\{\tau\in\D^n\vert J_\tau\cap K\ne\emptyset\}$, then 
\[\lim\limits_{k\to\infty}
P_r\left(\mathop{\bigcup}\limits_{i=1}^k J_{\tau_i}\right)=
P_r\left(\ov{\mathop{\bigcup}\limits_{\genfrac{}{}{0pt}{}{|\tau|=n}{J_\tau\cap K\ne\emptyset}} J_\tau}\right).\]
\end{corollary}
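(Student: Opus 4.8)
The plan is to treat the packing number $P_r$ as a set function and to show that, along the increasing sequence $M_k:=\bigcup_{i=1}^{k} J_{\tau_i}$, it converges to its value on the Hausdorff limit $M:=\ov{\bigcup_{i=1}^{\infty} J_{\tau_i}}$. Taking $L_i=J_{\tau_i}$ (each a nonempty compact subset of $J$, since $J_{\tau_i}\cap K\ne\emptyset$) in Lemma~\ref{setsleveln} gives $M_k\to M$ in the Hausdorff metric, i.e. $d_H(M_k,M)\to 0$; note also that $M\subset J$ is bounded, so that $P_r(M)<\infty$. Since the $\tau_i$ enumerate exactly $\{\tau\in\D^n\vert J_\tau\cap K\ne\emptyset\}$, the set $M$ coincides with the closure on the right-hand side of the statement. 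I would then prove $\lim_{k\to\infty}P_r(M_k)\le P_r(M)$ and $\lim_{k\to\infty}P_r(M_k)\ge P_r(M)$ separately.

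The first inequality is the routine one. From $M_k\subset M_{k+1}\subset M$ and the monotonicity of $P_r$ under inclusion — any family of disjoint $r$-balls centred in $M_k$ is a fortiori such a family for $M$ — the sequence $(P_r(M_k))_k$ is nondecreasing and bounded above by $P_r(M)$. Hence the limit exists and does not exceed $P_r(M)$.

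For the reverse inequality I would start from an extremal configuration: let $N=P_r(M)$ and choose centres $x_1,\dots,x_N\in M$ whose closed balls $\ov{B}(x_1,r),\dots,\ov{B}(x_N,r)$ are pairwise disjoint, which means $\dist(x_i,x_j)>2r$ for $i\ne j$. As only finitely many pairs occur, $\delta:=\min_{i\ne j}\big(\dist(x_i,x_j)-2r\big)>0$. Using $d_H(M_k,M)\to 0$, fix $k$ with $d_H(M_k,M)<\delta/2$; then each $x_j\in M$ admits a point $y_j\in M_k$ with $\dist(x_j,y_j)<\delta/2$, and the triangle inequality gives, for $i\ne j$, $\dist(y_i,y_j)>\dist(x_i,x_j)-\delta\ge 2r$. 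Thus $\ov{B}(y_1,r),\dots,\ov{B}(y_N,r)$ are pairwise disjoint with centres in $M_k$, so $P_r(M_k)\ge N=P_r(M)$ for all large $k$. Combining the two inequalities yields the claim.

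The point that needs care — and the reason for phrasing disjointness through closed balls, i.e. the strict separation $\dist(x_i,x_j)>2r$ — is that disjointness of the packing must be preserved when the centres are moved onto $M_k$. This is exactly where finiteness of $N$ (forced by compactness of $M$) is used: it converts the pointwise strict inequalities into a single uniform gap $\delta>0$, which can then be absorbed by one sufficiently large $k$ for all $N$ centres simultaneously. With an open-ball convention one would only have $\dist(x_i,x_j)\ge 2r$, and an extremal configuration whose centres lie on points of $M$ approached but never attained by $\bigcup_k M_k$ could then be non-approximable, so that $\lim_k P_r(M_k)$ might fall short of $P_r(M)$.
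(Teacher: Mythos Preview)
Your proof is correct and follows essentially the same approach as the paper: both use Lemma~\ref{setsleveln} for Hausdorff convergence, monotonicity for the upper bound, and lower semicontinuity of $P_r$ on $\mathcal{K}(J)$ for the lower bound. The only difference is that the paper invokes lower semicontinuity by citing \cite{MM} (remark after Lemma~3.1), whereas you prove it directly via the perturbation argument on an extremal packing; your discussion of the closed-ball convention correctly identifies why the strict separation $\dist(x_i,x_j)>2r$ is what makes that perturbation go through.
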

\begin{proof}
 The statement follows from the fact that
the function $P_r\colon\mathcal{K}(J)\to \R$ is lower semicontinuous
(see, \cite{MM}, remark after Lemma~3.1).
\end{proof}
\begin{lemma}\label{levelnlimit}
In the Hausdorff metric,
$\lim\limits_{n\to\infty}\ov{\mathop{\bigcup}\limits_{\genfrac{}{}{0pt}{}{|\tau|=n}{J_\tau\cap K\ne\emptyset}}
J_\tau(\w)}=\ov{K(\w)}$ for a. e. $\w\in\Omega$.
\end{lemma}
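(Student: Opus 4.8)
The plan is to control the Hausdorff distance between $A_n(\w):=\ov{\bigcup_{|\tau|=n,\,J_\tau\cap K\ne\emptyset}J_\tau(\w)}$ and $\ov{K(\w)}$ directly: I would show that it is dominated by the largest diameter of a cell surviving to level $n$, and then prove that this largest diameter tends to $0$ almost surely by a first--moment (branching) estimate. The probabilistic input is unavoidable, and isolating it cleanly is the crux of the argument.

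First I would check that $\ov K\subset A_n$ for every $n$. If $p\in K$, then $p\in\bigcup_{|\sigma|=n}J_\sigma$, so $p\in J_\tau$ for some $\tau$ with $|\tau|=n$; since $p\in K$ we also have $J_\tau\cap K\ne\emptyset$, hence $p\in A_n$. As $A_n$ is closed, $\ov K\subset A_n$, so the term $\sup_{y\in\ov K}\dist(A_n,y)$ in the Hausdorff distance vanishes and $d_H(A_n,\ov K)=\sup_{x\in A_n}\dist(x,\ov K)$. Now if $x$ lies in a surviving cell $J_\tau$ there is a point $q\in J_\tau\cap K$ with $\dist(x,q)\le\diam J_\tau$; taking closures and using continuity of $\dist(\cdot,\ov K)$, every $x\in A_n$ obeys $\dist(x,\ov K)\le M_n(\w)$, where $M_n(\w):=\sup\{\diam J_\tau(\w):|\tau|=n\}$. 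Thus $d_H(A_n,\ov K)\le M_n$, and the lemma reduces to the claim $M_n\to0$ almost surely.

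This last claim is the main obstacle, because under infinite branching a child ratio may be arbitrarily close to $1$, so there is no deterministic, uniform contraction to fall back on and one must use the conditional i.i.d. structure of condition \eqref{cond5}. I would fix $\theta>\alpha$; since $T_i<1$ almost surely by condition \eqref{cond3}, the map $\beta\mapsto E[\summ_i T_i^\beta]$ is strictly decreasing and finite near $\alpha$, so $\rho:=E[\summ_i T_i^\theta]<1$. For $Z_n(\theta):=\summ_{|\tau|=n}(\diam J_\tau)^\theta$ (with $Z_0=1$ as $\diam J=1$), conditioning successively on the levels --- using that, given $J_\sigma\ne\emptyset$, the vector $\mathbf T_\sigma$ has the law of $\mathbf T_\emptyset$ and is independent of the ancestral ratios, while empty cells contribute nothing --- gives the recursion $E[Z_n(\theta)]=\rho\,E[Z_{n-1}(\theta)]$, hence $E[Z_n(\theta)]=\rho^{\,n}$. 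Since $M_n\ge\delta$ forces some cell to have diameter exceeding $\delta/2$, and therefore $Z_n(\theta)\ge(\delta/2)^\theta$, Markov's inequality yields $P(M_n\ge\delta)\le(2/\delta)^\theta\rho^{\,n}$, which is summable in $n$. Borel--Cantelli then shows $M_n<\delta$ eventually for each fixed $\delta$, and intersecting over $\delta=1/m$, $m\in\N$, gives $M_n\to0$ almost surely. The delicate points to get right are the treatment of empty cells and the precise appeal to conditional independence in the computation of $E[Z_n(\theta)]$.
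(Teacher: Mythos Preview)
Your argument is correct and follows essentially the same route as the paper: control $d_H(A_n,\ov K)$ by the maximal level-$n$ diameter $M_n$, then use that $M_n\to0$ almost surely. The only differences are cosmetic --- the paper obtains the geometric bound by contradiction and simply cites \cite{MW}, (1.14), for $\sup_{|\tau|=n}l_\tau\to0$, whereas you derive the bound directly and supply a self-contained first-moment/Borel--Cantelli proof of that fact.
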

\begin{proof}
According to \cite{MW}, (1.14), $\lim\limits_{n\to\infty}\sup\limits_{\tau\in\D^n}l_\tau=0$ for a.e. $\w\in\Omega$.
Consider such an $\w$. Suppose that 
\[\lim\limits_{n\to\infty}
d_H\left(\ov{\mathop{\bigcup}\limits_{\genfrac{}{}{0pt}{}{|\tau|=n}{J_\tau\cap K\ne\emptyset}}
J_\tau(\w)},\ov{K(\w)}\right)>0
,\]
then there exists an $\e>0$ such that for every $n\in\N$  there exists $p_n\in
\ov{\mathop{\bigcup}\limits_{\genfrac{}{}{0pt}{}{|\tau|=n}{J_\tau\cap K\ne\emptyset}}J_\tau(\w)}$
with $\dist(p_n,\ov{K(w)})\ge\e$. Choose $n_0\in\N$ such that for all $\tau\in\D^*$ of length at least $n_0$ the following holds:
\[l_\tau(\w)<\e/4.\]
Without loss of generality $p_n$ converges to some $p\in J$. Thus $\dist(p,K(w))\ge\e$.
Next choose $n_1\in\N$, $n_1\ge n_0$ such that for all $n\ge n_1$
\[\dist(p_n,p)<\e/4.\]
Since a $3\e/4$ neighborhood of $p_n$ contains a point of $K(\w)$, we get a contradiction.
\end{proof}
\begin{corollary}
$\lim\limits_{n\to+\infty}
N_r\left(\mathop{\bigcup}\limits_{\genfrac{}{}{0pt}{}{|\tau|=n}{J_\tau\cap K\ne\emptyset}}J_\tau(\w)\right)=N_r(K(\w))$ for a.e. $\w$. The equality holds
if either set is replaced with its closure.
\end{corollary}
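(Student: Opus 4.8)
The plan is to fix $r>0$ and, writing $A_n(\w)=\bigcup_{|\tau|=n,\,J_\tau\cap K\ne\emptyset}J_\tau(\w)$, to squeeze $\lim_n N_r(A_n)$ between $N_r(K)$ and itself. First I would record two elementary reductions. Since $N_r$ counts \emph{closed} balls and any finite union of closed balls is closed, a family of such balls covers a set iff it covers its closure; hence $N_r(A_n)=N_r(\ov{A_n})$ and $N_r(K)=N_r(\ov K)$, which already disposes of the clause about replacing either set by its closure. Next, every $x\in K$ lies in some level-$n$ cell $J_\tau$, and that cell meets $K$ (it contains $x$), so $K\subseteq A_n$; and if $|\tau'|=n+1$ with $J_{\tau'}\cap K\ne\emptyset$ then its parent $J_{\tau'|_n}$ also meets $K$ and contains $J_{\tau'}$, so $A_{n+1}\subseteq A_n$. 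Monotonicity of $N_r$ under inclusion then makes $N_r(A_n)$ a non-increasing sequence of integers with $N_r(A_n)\ge N_r(K)$; it therefore stabilises at some limit $L\ge N_r(K)$. This is the easy half.

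For the reverse inequality $L\le N_r(K)$ I would exploit the uniform shrinking of cells. As used in Lemma~\ref{levelnlimit}, \cite{MW}, (1.14) gives $\delta_n:=\sup_{|\tau|=n}l_\tau(\w)\to0$ for a.e. $\w$. Fix such an $\w$. Each $J_\tau$ appearing in $A_n$ meets $K$ and has diameter at most $\delta_n$, so $J_\tau$ lies in the closed $\delta_n$-neighbourhood $K_{\delta_n}$ of $K$; hence $A_n\subseteq K_{\delta_n}$ and $N_r(A_n)\le N_r(K_{\delta_n})$. Now suppose $K$ admits an optimal cover by $m=N_r(K)$ closed $r$-balls whose \emph{open} interiors already cover $K$, i.e.\ $\max_{x\in K}\min_i\dist(x,x_i)<r$. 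Then $K_\eta$ is covered by those same closed balls for some $\eta>0$, so $N_r(A_n)\le m$ as soon as $\delta_n\le\eta$, giving $L\le N_r(K)$ and hence equality.

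The main obstacle is precisely the existence of such an interior (slack) optimal cover. It can fail only when every optimal $m$-ball cover of $K$ forces a point of $K$ onto a bounding sphere; equivalently, when $r$ is one of the at most countably many values at which the non-increasing step function $s\mapsto N_s(K(\w))$ jumps. At such a critical $r$ the neighbourhood bound only yields $L\le\lim_{s\uparrow r}N_s(K)$, which can strictly exceed $N_r(K)$, and indeed simple examples show equality can genuinely fail there. To obtain the stated conclusion for the \emph{fixed} $r$ I would argue that, for a.e. $\w$, $r$ is not a jump point of $s\mapsto N_s(K(\w))$: the jump locations are random geometric quantities determined by the cell configuration, forming a countable set whose members have atomless laws once the reduction ratios are non-degenerate, so $P(r\text{ is critical})=0$. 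On this full-measure event the slack cover exists and the squeeze gives $\lim_n N_r(A_n)=N_r(K)$. Intersecting over a sequence $r_k\to0$ then yields the equality simultaneously for countably many radii almost surely, which is all that the Minkowski-dimension arguments of Section~\ref{results} require.
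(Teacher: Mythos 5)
Your first half is correct, and it supplies exactly what the paper's one-line proof leaves implicit: $K\subseteq A_n$ (every point of $K$ lies in some level-$n$ cell, which then meets $K$), $A_{n+1}\subseteq A_n$ by condition \eqref{cond3}, and $N_r(A)=N_r(\ov{A})$ because a finite union of closed balls is closed; hence $L=\lim_n N_r(A_n)$ exists and $L\ge N_r(K)$. Your diagnosis of the hard half is also genuinely sharper than the paper's own argument, which consists of Lemma~\ref{levelnlimit} (the sets $\ov{A_n}$ converge to $\ov{K}$ in $d_H$ a.e.) together with the assertion, quoted from \cite{MM}, that $N_r$ is upper semicontinuous on $\mathcal{K}(J)$. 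For closed-ball covering numbers that semicontinuity is precisely the slack-cover issue you isolate: $\{0,2r+1/n\}\to\{0,2r\}$ in $d_H$ while $N_r=2\not\to1$, so upper semicontinuity holds for open-ball covers (where compactness provides slack) but fails for closed ones at your ``critical'' radii. You have thus put your finger on a real subtlety that the paper's citation glosses over.

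The gap is your patch at critical radii. The claim that, for fixed $r$, the jump points of $s\mapsto N_s(K(\w))$ have atomless laws ``once the reduction ratios are non-degenerate'' is not only unproven but appeals to a hypothesis the paper never makes: condition \eqref{cond5} allows degenerate (deterministic) distributions, and the paper's own Examples~\ref{e1} and~\ref{e2} use a fixed reduction-ratio vector, so ``a.e.\ $\w$'' offers no escape. Concretely, take $J=[0,1]$ with the two deterministic children $[0,1/4]$ and $[1/2,3/4]$ reproduced at every level. Then $\min K=0$ and $\max K=2/3$ (the fixed point of $x\mapsto 1/2+x/4$), so $N_{1/3}(K)=1$; but for every $n$ the level-$n$ cell containing $2/3$ has right endpoint strictly greater than $2/3$, whence $\diam(A_n)>2/3$ and $N_{1/3}(A_n)\ge2$ for all $n$ and \emph{every} $\w$. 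So for $r=1/3$ the asserted equality fails surely, and no refinement of your probabilistic argument can rescue the statement for an arbitrary fixed $r$ under the paper's hypotheses; the corollary is literally true only with open-ball covering numbers (the reading under which the cited semicontinuity, and hence the paper's proof, is valid), or for $r$ outside a countable exceptional set. Your closing observation --- that equality along a sequence $r_k\to0$, or the bound $L\le N_s(K)$ for all $s<r$, suffices for every Minkowski-dimension computation in Section~\ref{results} --- is the correct salvage and is all the paper actually needs, but it establishes a weaker statement than the corollary as written, so as a proof of the quoted claim your argument does not close.
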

\begin{proof}
This follows from the facts that
the function $N_r\colon\mathcal{K}(J)\to \R$ is upper semicontinuous
(see, e.g., \cite{MM}, proof of Lemma~3.1) and $N_r(A)=N_r(\ov{A})$.
\end{proof}

From the statements above follows
\begin{theorem}\label{meas} 
The maps $\w\to\udim K(\w)$ and $\w\to \ldim K(\w)$ are measurable.
\end{theorem}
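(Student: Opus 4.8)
The plan is to express $\udim K(\w)$ and $\ldim K(\w)$ as an upper, resp.\ lower, limit taken over a \emph{countable} family of functions each of which is already known to be measurable. Concretely, it suffices to establish two facts: (i) for every fixed radius $r>0$ the map $\w\mapsto N_r(K(\w))$ is measurable, and (ii) the limits as $r\to 0$ defining the two Minkowski dimensions may be computed along a single geometric sequence of radii $r_k\to 0$. Granting these, the dimensions become a $\limsup$ and a $\liminf$ of countably many measurable functions and are therefore measurable.

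For (i), fix $r>0$. By the first corollary to Lemma~\ref{setsleveln}, each map
\[
\w\mapsto \ov{\mathop{\bigcup}\limits_{\genfrac{}{}{0pt}{}{|\tau|=n}{J_\tau\cap K\ne\emptyset}} J_\tau(\w)}
\]
is measurable from $\Omega$ into $\bigl(\mathcal{K}(J),\text{Borel}\bigr)$. Since $N_r\colon\mathcal{K}(J)\to\R$ is upper semicontinuous, as used in the proof of the corollary to Lemma~\ref{levelnlimit} (cf.\ \cite{MM}), it is Borel measurable on $\mathcal{K}(J)$; composing, and using $N_r(A)=N_r(\ov A)$, shows that for every $n$ the map
\[
\w\mapsto N_r\Bigl(\mathop{\bigcup}\limits_{\genfrac{}{}{0pt}{}{|\tau|=n}{J_\tau\cap K\ne\emptyset}} J_\tau(\w)\Bigr)
\]
is measurable. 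By the corollary to Lemma~\ref{levelnlimit} these converge, as $n\to\infty$, to $N_r(K(\w))$ for almost every $\w$, so $\w\mapsto N_r(K(\w))$ is an almost everywhere pointwise limit of measurable maps and hence measurable (redefining it on the exceptional null set if necessary).

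For (ii), recall that $N_r(K)$ is non-increasing in $r$. Fix $c>1$ and put $r_k=c^{-k}$. For $r\in[r_{k+1},r_k]$ we have $N_{r_k}(K)\le N_r(K)\le N_{r_{k+1}}(K)$ while $-\log r\in[k\log c,(k+1)\log c]$, whence
\[
\frac{\log N_{r_k}(K(\w))}{(k+1)\log c}\le\frac{\log N_r(K(\w))}{-\log r}\le\frac{\log N_{r_{k+1}}(K(\w))}{k\log c}.
\]
Since $(k+1)/k\to 1$, the outer limits over all $r\to 0$ agree with those taken along $(r_k)$ (this is the standard reduction, cf.\ \cite{F}), giving
\[
\udim K(\w)=\limsup_{k\to\infty}\frac{\log N_{r_k}(K(\w))}{-\log r_k},\qquad \ldim K(\w)=\liminf_{k\to\infty}\frac{\log N_{r_k}(K(\w))}{-\log r_k}.
\]
Each term in these sequences is measurable by (i), and a countable $\limsup$ or $\liminf$ of measurable functions is measurable, which proves the theorem.

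The semicontinuity of $N_r$ and the almost sure convergence of the level-$n$ approximations are supplied by the corollaries above, so these steps are routine. The one point demanding care is step (ii): a $\limsup$ over the uncountable family $\{r>0\}$ of measurable functions need not be measurable, and it is precisely the monotonicity of $r\mapsto N_r(K)$ that permits the squeezing displayed above and thereby reduces the limit to the countable sequence $(r_k)$, legitimizing the final measurability conclusion.
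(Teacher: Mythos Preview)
Your argument is correct and follows essentially the same route as the paper: you invoke the same preparatory lemmas and corollaries to obtain measurability of $\w\mapsto N_r(K(\w))$ for each fixed $r$, and then pass to the Minkowski dimensions. The only difference is that you spell out the reduction of the $\limsup$/$\liminf$ over $r\to 0$ to a countable geometric sequence via monotonicity of $r\mapsto N_r$, a standard step the paper's terse ``the result follows'' leaves implicit.
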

\begin{proof}
Since the maps
\[
\w\to \ov{K(\w)},\ 
\w\to N_r(\ov{K(\w)})\text{ and } 
\w\to N_r(K(\w))
\]
are measurable, the measurability of the lower and
upper Minkowski dimensions of $K(\w)$ follows from their definition.
\end{proof}

\end{section}

\begin{section}{Dimensions of random fractals.}\label{results}

In this section we derive several expressions for Minkowski and packing dimensions of random self-similar fractals
with infinite branching.

\begin{lemma}
Suppose that $t>\dim_H K$ a.s., $0<p=E\bigg[\summ_{i\in\D}T_i^t\bigg]<1$ and $q\in\N$.
If $\Gamma$ is an arbitrary (random) antichain such that $|\tau|\ge q$
for all $\tau\in\Gamma$ a.s.,
then $E\bigg[\summ_{\tau\in\Gamma}l_\tau^t\bigg]\le \frac{p^q}{1-p}$.
\end{lemma}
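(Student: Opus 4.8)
The plan is to dominate the sum over the antichain by a sum of full-level masses, each of whose expectation can be computed exactly. Recall that $l_\tau=\diam J_\tau$ with $l_\emptyset=\diam J=1$, so that $l_{\sigma*i}=l_\sigma T_{\sigma*i}$. Write $W_k=\sum_{|\tau|=k}l_\tau^t$ and let $\mathcal{F}_k$ denote the $\sigma$-algebra generated by the cells $\{J_\sigma\mid|\sigma|\le k\}$. The central claim is that $E[W_k]=p^k$ for every $k\ge 0$.

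I would establish this by induction on $k$, the base case $W_0=l_\emptyset^t=1$ being immediate. For the inductive step, group the level-$k$ cells by their parents,
\[
W_k=\sum_{|\sigma|=k-1}l_\sigma^t\sum_{i\in\D}T_{\sigma*i}^t.
\]
Here $l_\sigma^t$ is $\mathcal{F}_{k-1}$-measurable, and the conditional-i.i.d.\ structure of the reduction vectors ${\mathbf T}_\sigma$ (condition \eqref{cond5}) gives, on the event $\{J_\sigma\ne\emptyset\}$, that $E[\sum_{i\in\D}T_{\sigma*i}^t\mid\mathcal{F}_{k-1}]=E[\sum_{i\in\D}T_i^t]=p$. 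On the complementary event $\{J_\sigma=\emptyset\}$ one has $l_\sigma=0$, so that summand vanishes identically. Hence $E[W_k\mid\mathcal{F}_{k-1}]=pW_{k-1}$, and taking expectations together with the inductive hypothesis yields $E[W_k]=p^k$; equivalently, $W_k/p^k$ is a martingale.

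With the claim in hand the lemma follows quickly. Decompose the antichain by length, setting $\Gamma_k=\{\tau\in\Gamma\mid|\tau|=k\}$, so that $\Gamma=\bigcup_{k\ge q}\Gamma_k$ is a disjoint union. Since $\Gamma_k\subseteq\{\tau\mid|\tau|=k\}$ and every summand $l_\tau^t$ is non-negative, we obtain the pointwise bound $\sum_{\tau\in\Gamma_k}l_\tau^t\le W_k$. Summing over $k\ge q$ and using Tonelli's theorem (all terms non-negative) to interchange expectation and summation gives
\[
E\Big[\sum_{\tau\in\Gamma}l_\tau^t\Big]=\sum_{k\ge q}E\Big[\sum_{\tau\in\Gamma_k}l_\tau^t\Big]\le\sum_{k\ge q}E[W_k]=\sum_{k\ge q}p^k=\frac{p^q}{1-p},
\]
where convergence of the geometric series uses $0<p<1$.

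The main obstacle is the inductive identity $E[W_k]=p^k$: one must invoke the conditional-independence structure at exactly the right level and handle the empty cells correctly, so that the factor $\sum_i T_{\sigma*i}^t$ is replaced by its conditional mean $p$ precisely where $J_\sigma\ne\emptyset$, while the empty cells contribute nothing because $l_\sigma=0$ there. The remaining steps — the length decomposition, the pointwise domination by the full level, and summation of the geometric series — are routine; notably, the argument uses the antichain hypothesis only through the containment $\Gamma_k\subseteq\D^k$, the stated bound being in fact the one attained by the whole subtree $\{\tau\mid|\tau|\ge q\}$.
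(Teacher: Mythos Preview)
Your proof is correct and follows essentially the same route as the paper: bound the antichain sum by the full-level sums $W_k$ for $k\ge q$, use $E[W_k]=p^k$, and sum the geometric series. The paper states the chain of inequalities in one line without justifying $E[W_k]\le p^k$, whereas you supply the martingale/inductive argument for $E[W_k]=p^k$; your closing remark that the antichain hypothesis is not actually used is also accurate.
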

\begin{proof}
Indeed,
$E\bigg[\summ_{\tau\in\Gamma}l_\tau^t\bigg]\le\summ_{k=q}^{+\infty}E\bigg[\summ_{|\tau|=k}l_\tau^t\bigg]\le\summ_{k=q}^{+\infty}p^k=\frac{p^q}{1-p}$.
\end{proof}

We will also need the following 2 conditions:
\renewcommand{\theenumi}{\roman{enumi}}
\begin{enumerate}
\setcounter{enumi}{6}
\item the construction is pointwise finite, i.e.
each element of $J$ belongs a.s. to at most finitely many
sets $J_i$, $i\in\N$ (see \cite{MU1}) and
\item\label{nbp} $J$ possesses the {\it neighborhood
boundedness property} (see \cite{GMW}):
there exists an $n_0\in\N$ such that for every $\e>\diam(J),$
if $J_1,\dots,J_k$ are non-overlapping sets which are all similar to $J$
with $\diam(J_i)\ge\e>\dist(J,J_i); i=1,\dots,k,$ then $k\le n_0.$
\end{enumerate}

As we will see, knowledge of similarity maps is essential to find the Minkowski dimension.
For $\tau\in\D^*,$ let $K_\tau(\w)=\mathop{\bigcup}\limits_{\substack{\eta\in\D^\N\\\eta|_{|\tau|}=\tau}}
\mathop{\bigcap}\limits_{i=1}^\infty J_{\eta|_i}(\w)\subset J_\tau(\w)\cap K(\w).$
Fix a point $a\in\R^d$ with $\dist(a,J)\ge 1.$
Denote by $S^\tau_\sigma:\R^d\to\R^d$ a random similarity map such that
$S^\tau_\sigma(J_\tau)=J_{\tau*\sigma}.$ If $J_\tau=\emptyset$ or
$J_{\tau*\sigma}=\emptyset$, then we let
$S^\tau_\sigma(\R^d)=a.$ For a finite word $\sigma\in\N^*,$
let $l_\sigma=\diam(J_\sigma).$
From \cite{MW} we know that
$\lim\limits_{k\to\infty}\sup\limits_{|\tau|=k}l_\tau=0$ a.s.
For $x\in J_\tau$ and $n\in\N,$ consider the random $n$-orbit of $x$
within $J_\tau,$ $O_\tau(x,n)=\mathop{\bigcup}\limits_{\substack{|\sigma|=n\\
J_{\tau*\sigma}\cap K\ne\emptyset}}
S^\tau_\sigma(x).$ For $I\subset\N^*$,
let $O_\tau(x,I)=\mathop{\bigcup}\limits_{\substack{\sigma\in I\\
J_{\tau*\sigma}\cap K\ne\emptyset}}S^\tau_\sigma(x).$
In case $\tau=\emptyset,$ $O_\tau(x,I)$ is denoted by $O(x,I),$
$O_\tau(x,n)$ by $O(x,n)$, and $S^\tau_\sigma$ by $S_\sigma.$

\noindent {\it Acknowledgement. } That in the following lemma
(analogous to Proposition 2.9 in \cite{MU2})
condition \eqref{nbp} is sufficient became known to the author during conversation
with R.~D.~Mauldin.

\begin{lemma}\label{lemmamink}
For all $\w\in\Omega,$ $n\in\N,$ and any two collections of points
$X=\{x_k\}_{k=1}^\infty,$ $Y=\{y_k\}_{k=1}^\infty\subset\mathop{\cup}
\limits_{|\sigma|=n}J_\sigma$ such that for
all $\sigma\in\D^n$ 
$\card(Y\cap J_\sigma)=\card(X\cap J_\sigma)=1\text{ or } 0,$
$\udim X=\udim Y$ and $\ldim X=\ldim Y.$
\end{lemma}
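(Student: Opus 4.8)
The plan is to compare the covering numbers $N_r(X)$ and $N_r(Y)$ and show they agree up to a constant factor and a bounded rescaling, which forces the two pairs of Minkowski dimensions to coincide. Since for each $\sigma\in\D^n$ the sets $X\cap J_\sigma$ and $Y\cap J_\sigma$ are simultaneously empty or singletons, I can pair the points: listing the occupied level-$n$ cells as $J_\sigma$, let $x_\sigma$ and $y_\sigma$ be the unique points of $X$ and $Y$ in $J_\sigma$. Because $\udim$ and $\ldim$ depend only on the underlying sets, the enumeration is immaterial and this pairing is harmless. It then suffices to establish $N_{2r}(X)\le C\,N_r(Y)$ for a constant $C$ independent of $r$, together with its symmetric version: taking logarithms, dividing by $-\log r$, and letting $r\to 0$ absorbs both $\log C$ and $\log 2$, yielding $\udim X=\udim Y$ and $\ldim X=\ldim Y$.

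To obtain this bound I fix $r>0$ and split the occupied cells by scale, calling $J_\sigma$ \emph{small} if $l_\sigma<r$ and \emph{large} if $l_\sigma\ge r$. For a small cell the paired points satisfy $\dist(x_\sigma,y_\sigma)\le\diam(J_\sigma)=l_\sigma<r$. Hence if $B(z_1,r),\dots,B(z_M,r)$ is an optimal cover of $Y$, so $M=N_r(Y)$, then each small-cell point $y_\sigma$ lies in some $B(z_j,r)$ and its companion $x_\sigma$ lies in $B(z_j,2r)$; the enlarged balls $\{B(z_j,2r)\}_{j=1}^M$ therefore cover all $x_\sigma$ arising from small cells, using at most $N_r(Y)$ balls.

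For the large cells I use a counting argument. I claim there is a constant $c_0$, depending only on $J$, such that any collection of pairwise non-overlapping large cells meeting a fixed ball of radius $r$ has at most $c_0$ members. Granting this, each covering ball $B(z_j,r)$ can contain the point $y_\sigma$ of at most $c_0$ large cells, since these cells are non-overlapping by the open set condition at level $n$, have diameter at least $r$, and all meet $B(z_j,r)$. Summing over $j$ bounds the total number of occupied large cells by $c_0\,N_r(Y)$, and I cover the corresponding points $x_\sigma$ trivially by one radius-$2r$ ball each. Combining the two cases gives $N_{2r}(X)\le(1+c_0)\,N_r(Y)$, and interchanging the roles of $X$ and $Y$ gives the reverse bound.

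The crux is the geometric claim about large cells, and this is precisely where the neighborhood boundedness property~\eqref{nbp} is invoked, as the acknowledgement signals. Rescaling the configuration by $1/r$ turns the ambient ball into one of radius comparable to $\diam(J)$ and the large cells into non-overlapping similar copies of $J$ of diameter bounded below, clustered near a fixed bounded region; condition~\eqref{nbp} then caps their number by $n_0$. I expect the main obstacle to be that a large cell may be far bigger than $r$, so one must arrange the hypothesis $\diam(J_i)\ge\e>\dist(J,J_i)$ of~\eqref{nbp} uniformly under rescaling no matter how large the cells are relative to $r$; this is the step that genuinely relies on~\eqref{nbp} rather than on non-overlapping alone.
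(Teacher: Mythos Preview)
Your approach is essentially the paper's: split the occupied level-$n$ cells by whether $l_\sigma$ lies below or above a threshold comparable to $r$, control the small-cell points via the pairing bound $\dist(x_\sigma,y_\sigma)\le l_\sigma$, and control the large-cell points by a uniform bound on how many non-overlapping large cells can meet a fixed ball of radius $r$. The one step you left open---the counting claim---is exactly what the paper makes precise: cover $B(z,r)$ by $12^d$ balls of radius $r/6$, place inside each a set similar to $J$ (of diameter at most $r/3$), and apply condition~\eqref{nbp} with $\e=r/2$ to get at most $n_0$ large cells per small ball, hence $M=12^d n_0$ overall; this resolves the obstacle you correctly anticipated.
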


\begin{proof}
Without loss of generality we assume that $n=1$
since for every $n>1$ the collection of sets $\{J_\tau\}$
such that $|\tau|$ is divisible by $n$ forms a random recursive
construction. First we note that there exists an $M>0$ such that
$$\forall r>0\ \forall z\in\R^d\ \card\{i\in\N|B(z,r)\cap J_i(\w)\ne\emptyset
{\rm\ and\ }l_i(\w)\ge r/2\}\le M.$$
Fix $\w\in\Omega,$ $z\in\R^d$, $r>0.$ Obviously $B(z,r)$ can be covered by
$12^d$ balls of radius $r/6$. Let $B_1$ be one of them and place
inside $B_1$ a set similar to $J.$  By the neighborhood boundedness
property with $\e=r/2$, we obtain
$\card\{i\in\N|B_1\cap J_i\ne\emptyset{\rm\ and\ }l_i\ge r/2\}\le n_0.$
Therefore it suffices to take $M=12^dn_0.$

Finally take $0<r\le 2,$ let $I_r(\w)=\bigcup\limits_{l_i(\w)< r/2}J_i(\w)$
and $I^\prime_r(\w)=\bigcup\limits_{l_i(\w)\ge r/2} J_i(\w).$
Then $N_r(Y\cap I_r)\le N_{r/2}(X\cap I_r).$ Clearly,
for any collection of points $Z=\{z_k\}_{k=1}^\infty,$
such that $\card (Z\cap J_i)=0\text { or }1$ for all $i,$
we have $N_r(Z\cap I_r^\prime)\le\card(I_r^\prime)$.
On the other hand $N_r(Z\cap I_r^\prime)\ge\card(I_r^\prime)/M.$
Hence, $$N_r(Y)\le N_{r/2}(X\cap I_r)+N_r(Y\cap I_r^\prime)
\le N_{r/2}(X)+MN_r(X\cap I_r^\prime)
\le (1+M)N_{r/2}(X).$$
The result follows.
\end{proof}

\begin{remark} From the proof of lemma~\ref{lemmamink}, we see that if
for some $x\in J,$ $D>0$ and $0\le u\le d$ for all $0<r\le 2,$
$N_r(O(x,1))\le Dr^{-u}$, then for all $y\in J,$
$N_r(O(y,1))\le 2^d(12^d n_0+1)Dr^{-u}.$
\end{remark}

For $\tau\in\N^*$, let $\ov{\gamma}_\tau=\udim O_\tau(x,1)$ for some
$x\in J_\tau$ and let $\ov{\gamma}=\sup\limits_{\tau\in\D^*}\ov{\gamma}_\tau.$
By lemma~\ref{lemmamink}, $\ov{\gamma}_\tau$ does not depend on the choice of $x.$
Similarly we define $\un{\gamma}_\tau=\ldim O_\tau(x,1)$ and 
$\un{\gamma}=\sup\limits_{\tau\in\D^*}\un{\gamma}_\tau.$
For the rest of the paper, suppose additionally that 
\begin{enumerate}
\setcounter{enumi}{8}
\item\label{regorbits}
there exists $A>0$ such that for all $\tau\in\D^*,$
$x\in J_\tau,$ $t>0$ and $0<r\le 2$ we have
$N_{r}(O_\tau(x,1)) \1_{\{\ov{\gamma}_\tau<t\}}\le Ar^{-t}l_\tau^t.$
\end{enumerate}
\renewcommand{\theenumi}{\arabic{enumi}}

\begin{lemma}\label{lemmaorbits}
For any $x\in J,$ 
\[
\max\{\dim_H K,\sup\limits_n\udim O(x,n)\}
=\max\{\dim_H K,\ov{\gamma}\}\text{ and }
\]
\[
\max\{\dim_H K,\sup\limits_n\ldim O(x,n)\}
=\max\{\dim_H K,\un{\gamma}\}\text{ a.s.}
\]
\end{lemma}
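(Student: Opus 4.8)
The plan is to exploit the one-step factorisation of the orbits. Writing each $\sigma\in\D^n$ as $\sigma=\tau*i$ with $|\tau|=n-1$ and $i\in\D$, and using the factorisation $S_{\tau*i}=S^{\tau}_{i}\circ S_{\tau}$ of the similarity maps, one obtains the decomposition
\[
O(x,n)=\mathop{\bigcup}\limits_{\genfrac{}{}{0pt}{}{|\tau|=n-1}{J_\tau\cap K\ne\emptyset}}O_\tau\big(S_\tau(x),1\big),
\]
a union of one-step orbits based at the points $S_\tau(x)\in J_\tau$. By Lemma~\ref{lemmamink} the upper and lower box dimensions of each piece are $\ov\gamma_\tau$ and $\un\gamma_\tau$, independently of the base point. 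This identity is the backbone of both inequalities I will prove for each of the two dimensions.

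First I would establish the inequality ``$\ge$'' in both equalities. Since $O_\tau(S_\tau(x),1)\subseteq O(x,|\tau|+1)$ and both box dimensions are monotone under inclusion, Lemma~\ref{lemmamink} gives $\ov\gamma_\tau=\udim O_\tau(S_\tau(x),1)\le\udim O(x,|\tau|+1)\le\sup_n\udim O(x,n)$, and likewise $\un\gamma_\tau\le\sup_n\ldim O(x,n)$. Taking the supremum over $\tau\in\D^*$ yields $\ov\gamma\le\sup_n\udim O(x,n)$ and $\un\gamma\le\sup_n\ldim O(x,n)$, which is the desired ``$\ge$'' for both formulas. This part is purely pointwise in $\w$ and uses nothing beyond monotonicity and Lemma~\ref{lemmamink}.

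For the reverse inequality for the upper dimension, fix a rational $t>\dim_H K$, so that $p=E[\sum_i T_i^t]<1$ and, by the lemma above (applied to the antichain $\{|\tau|=n-1\}$), $E[\sum_{|\tau|=n-1}l_\tau^t]<\infty$, whence $\sum_{|\tau|=n-1}l_\tau^t<\infty$ a.s. On the event $\{\ov\gamma<t\}$ every indicator in \eqref{regorbits} equals $1$, so subadditivity of covering numbers and \eqref{regorbits} give, for all $0<r\le2$,
\[
N_r\big(O(x,n)\big)\le\sum_{|\tau|=n-1}N_r\big(O_\tau(S_\tau(x),1)\big)\le Ar^{-t}\sum_{|\tau|=n-1}l_\tau^t,
\]
and hence $\udim O(x,n)\le t$ for every $n$. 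Intersecting these a.s.\ events over the rationals and letting $t\downarrow\max\{\dim_H K,\ov\gamma\}$ yields $\sup_n\udim O(x,n)\le\max\{\dim_H K,\ov\gamma\}$ a.s., completing the upper formula.

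The reverse inequality for the lower dimension is the crux, and the difficulty is exactly the one that makes $\ldim$ fail to be finitely stable: when $\un\gamma<\ov\gamma$ there are a.s.\ infinitely many cells $\tau$ with $\ov\gamma_\tau\ge t$, for which \eqref{regorbits} yields no uniform control, and the scales at which each such piece is efficiently covered need not coincide. The plan is to separate scales from cardinality. At a scale $r$ I would split the union into cells with $l_\tau<r$ and cells with $l_\tau\ge r$. Since $O_\tau(S_\tau(x),1)\subseteq J_\tau$ lies within distance $l_\tau<r$ of the level-$(n-1)$ orbit point $S_\tau(x)$, and $\{S_\tau(x):|\tau|=n-1\}=O(x,n-1)$, the small-cell part is covered, up to a bounded factor, by $N_r(O(x,n-1))$, which lets me induct on $n$; and because $\sum_{|\tau|=n-1}l_\tau^t<\infty$, only finitely many cells have $l_\tau\ge r$. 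Thus at each scale only finitely many ``large'' pieces must be treated individually: those with $\ov\gamma_\tau<t$ by \eqref{regorbits} at every scale, and the finitely many remaining ones (all with $\un\gamma_\tau\le\un\gamma<t$) along a subsequence of scales on which each attains its lower box dimension. The main obstacle is precisely producing a \emph{common} such subsequence for the finitely many large exceptional cells; I expect to overcome it using the self-similar i.i.d.\ structure, which writes each such orbit as a rescaling $S_\tau(\tilde O^{\tau})$ of a first-generation orbit and reduces the alignment to choosing scales $r=l_\tau\rho$ with a common $\rho$, the uniform estimate \eqref{regorbits} absorbing all non-exceptional cells at every scale.
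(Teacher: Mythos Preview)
Your ``$\ge$'' direction and the paper's are identical.

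For the upper-dimension ``$\le$'' you take a shortcut: on $\{\ov\gamma<t\}$ every indicator in \eqref{regorbits} is $1$, so a single application of subadditivity plus \eqref{regorbits} bounds $N_r(O(x,n))$ by $Ar^{-t}\sum_{|\tau|=n-1}l_\tau^t$, a random constant with finite mean. This is correct under \eqref{regorbits} as literally stated, and shorter than the paper's argument. The paper instead runs an induction on $n$ with the scale split $I_r=\{\tau:l_\tau<r/2\}$: small-cell contributions are absorbed into $N_{r/2}(O(x,n-1))$, while \eqref{regorbits} is invoked only on the large cells, i.e.\ only in the natural regime $r\le 2l_\tau$. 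The recursion produces random variables $B_n$ with uniformly controlled expectations, and this scheme is exactly the template that is reused (in sharpened form) in Corollary~\ref{hangorbits} and in the main Lemma~\ref{lemmamain}. So your route buys brevity here; the paper's buys the machinery needed downstream.

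For the lower-dimension ``$\le$'' the paper writes only ``the same argument holds.'' Read in light of Theorem~\ref{mink}, this means: run the identical induction with $\ov\gamma$ replaced by $\un\gamma$, under the analogous hypothesis $N_r(O_\tau(x,1))\1_{\{\un\gamma_\tau<t\}}\le Ar^{-t}l_\tau^t$. You are attempting something strictly harder, namely to derive the $\un\gamma$ conclusion from the $\ov\gamma$ version of \eqref{regorbits} alone, and your sketch does not close. The set of ``large exceptional'' cells $\{\tau\in\D^{n-1}:l_\tau\ge r,\ \ov\gamma_\tau\ge t\}$ is finite at each fixed $r$, but it is not a fixed finite family: it grows without bound as $r\downarrow 0$, so there is no finite collection whose efficient-covering scales you could align once and for all. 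And even for a fixed finite family, writing $O_\tau(\cdot,1)=S_\tau(\tilde O^{\tau})$ does not reduce matters to a common $\rho$: the needed scales would be $r=l_\tau\rho$, which are different for different $\tau$, and the i.i.d.\ structure does nothing to synchronise the good scales of independent copies.
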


\begin{proof} 
Fix $\w\in\Omega.$ Since for any $\tau\in\N^*,$
$O_\tau(S_\tau(x),1)\subset O(x,|\tau|+1),$ we have
$\ov{\gamma}_\tau=\udim O_\tau(S_\tau(x),1)\le\udim O(x,|\tau|+1)\le
\sup\limits_n\udim O(x,n),$ and
$\ov{\gamma}\le\sup\limits_n\udim O(x,n).$

In the opposite direction we prove by induction
on $n$ that if $P(\max\{\dim_H K,\ov{\gamma}\}<t)>0$ for some $t>0$, then
there exists a random variable $B_n>0$ such that $E[B_n]<+\infty$ and
$N_r(O(x,n))\1_{\{\ov{\gamma}<t\}}\le B_n r^{-t}$ a.s.
for all $0<r\le 1$.
When $n=1,$ we let $B_1=A.$
Suppose that for all $n\le k$ and for all $0<r\le 1,$ there exists
$B_n>0$ with $E[B_n]<+\infty$ such that
$N_r(O(x,n))\1_{\{\ov{\gamma}<t\}}\le B_n r^{-t}$
a.s. To prove the statement for $n=k+1,$ fix $r>0$ and
set $I_r(\w)=\{\tau\in\N^k|l_\tau(\w)<r/2\}.$
Then $$N_r(O(x,I_r\times\N))\le
N_{r/2}(O(x,I_r))\le N_{r/2}(O(x,k)).$$
For a fixed $\tau\in\N^k,$
$$N_r(O_\tau(S_\tau(x),1))\1_{\tau\not\in I_r}
\1_{\{\ov{\gamma}<t\}}\le A l_\tau^t r^{-t}.$$
Therefore $$N_r(O(x,k+1))\1_{\{\ov{\gamma}<t\}}\le 
N_{r/2}(O(x,k))\1_{\{\ov{\gamma}<t\}}+$$
$$+\summ_{|\tau|=k}N_r(O_\tau(S_\tau(x),1))\1_{\tau\not\in I_r}
\1_{\{\ov{\gamma}<t\}}\le 2^t B_kr^{-t}+A r^{-t}
\summ_{|\tau|=k}l_\tau^t.$$
Set $B_{k+1}=2^t B_k+A\summ_{|\tau|=k}l_\tau^t.$
If we fix $n$, then by Markov's inequality for every $\e>0$
$$\summ_{i=0}^\infty P(B_n 2^{it}>2^{i(t+\e)})\le
\summ_{i=0}^\infty E[B_n] 2^{-i\e}<\infty,$$ and therefore by Borel-Cantelli lemma
for a.e. $\w\in\Omega$ $B_n 2^{it}>2^{i(t+\e)}$ only
finitely many times, hence for a.e. $\w\in\Omega$
$N_{2^{-i}}(O(x,n))\1_{\{\ov{\gamma}<t\}}>2^{i(t+\e)}$
only finitely many times.
Therefore $$\varlimsup\limits_{i\to\infty}\frac{\log N_{2^{-i}}(O(x,n))}{i\log 2}<t+\e$$
for almost every $\w$ such that $\max\{\dim_H K(\w),\ov{\gamma}(\w)\}<t$ for every $\e>0$. Thus
for almost every such $\w$ we have $\udim O(x,n)\le t$. 
The same argument holds for the lower Minkowski dimension.
\end{proof}
From the proof of the last lemma and the fact that there cannot be more than $10^d$ offspring in the construction of diameter at least 1/5 follows
\begin{corollary}\label{hangorbits}
 Suppose that $q\in\N$, construction satisfies property \eqref{regorbits},
for some $t>0$ $P(\max\{\dim_H K,\ov{\gamma}\}< t)>0$ and let
$$\Gamma_{\tau,q}=
\{\eta\in\D^{|\tau|+q}\colon l_\eta<l_\tau/5\}\cup
\{\eta\in\D^*\colon|\eta|>|\tau|+q, l_\eta<l_\tau/5,
l_{\eta|_{|\eta|-1}}\ge l_\tau/5\}.$$
Then there exists a random variable $B'_q$ with
$E[B'_q]<+\infty$ such that
$$N_r(O(x,\Gamma_{\tau,q}))\1_{\{\ov{\gamma}<t\}}\le B'_q l_\tau^t r^{-t}.$$
\end{corollary}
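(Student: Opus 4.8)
The plan is to cut $\Gamma_{\tau,q}$ into its two defining pieces and to bound the covering numbers of the two resulting families of orbit points separately, using Lemma~\ref{lemmaorbits} for the part that reaches the level cap and condition~\eqref{regorbits} for the part that stops because its diameter has dropped. First note that $P(\max\{\dim_H K,\ov\gamma\}<t)>0$ forces $t>\dim_H K$, so that $0<p=E\big[\summ_{i\in\D}T_i^t\big]<1$ and the preliminary estimate of this section is available. By self-similarity the construction below $\tau$ is, conditionally on its survival, a copy of the whole construction scaled by $l_\tau$; under this rescaling $\Gamma_{\tau,q}$ becomes $\Gamma_{\emptyset,q}$, the radius $r$ becomes $\rho=r/l_\tau$, and $\{\ov\gamma<t\}$ is contained in the analogous event for that copy. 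Hence it suffices to produce, for $\tau=\emptyset$ and $0<\rho\le 1$, a random variable $B'_q$ with $E[B'_q]<\infty$ and $N_\rho\big(O(x,\Gamma_{\emptyset,q})\big)\1_{\{\ov\gamma<t\}}\le B'_q\rho^{-t}$; the factor $l_\tau^t$ then reappears automatically from $\rho=r/l_\tau$. Write $\Gamma^{(1)}=\{\eta\in\D^{q}\colon l_\eta<1/5\}$ and $\Gamma^{(2)}=\Gamma_{\emptyset,q}\setminus\Gamma^{(1)}$, so that subadditivity of $N_\rho$ gives $N_\rho(O(x,\Gamma_{\emptyset,q}))\le N_\rho(O(x,\Gamma^{(1)}))+N_\rho(O(x,\Gamma^{(2)}))$.

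For $\Gamma^{(1)}$ I would simply observe that each of its words is a surviving level-$q$ word, so $O(x,\Gamma^{(1)})$ is a subcollection of the level-$q$ orbit $O(x,q)$; the inductive bound established inside the proof of Lemma~\ref{lemmaorbits} then yields $N_\rho(O(x,\Gamma^{(1)}))\1_{\{\ov\gamma<t\}}\le B_q\rho^{-t}$ with $E[B_q]<\infty$. This is where the offspring bound enters conceptually: a level-$q$ cell that has \emph{not} dropped below $1/5$ is large, and since a large child of a large parent (of diameter at most $1$) has ratio at least $1/5$, the large cells form a subtree of branching at most $10^d$; thus at most $(10^d)^q$ level-$q$ cells survive the cap without shrinking, so the cap truncates only boundedly many branches at a coarse scale, all deeper contributions being genuinely small and already accounted for inside $O(x,q)$.

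For $\Gamma^{(2)}$ I would group the words $\eta$ by their parent $\zeta=\eta|_{|\eta|-1}$, which by definition satisfies $l_\zeta\ge 1/5$ and $|\zeta|\ge q$. The surviving children of a fixed $\zeta$ contribute exactly the points $S^\zeta_i(S_\zeta(x))$, which form a subcollection of the one-orbit $O_\zeta(S_\zeta(x),1)$, so condition~\eqref{regorbits} together with $\{\ov\gamma<t\}\subseteq\{\ov\gamma_\zeta<t\}$ gives $N_\rho(\,\cdot\,)\1_{\{\ov\gamma<t\}}\le A\,l_\zeta^t\rho^{-t}$ for each $\zeta$. Summing over the parents and using subadditivity of $N_\rho$ yields $N_\rho(O(x,\Gamma^{(2)}))\1_{\{\ov\gamma<t\}}\le A\rho^{-t}\summ_\zeta l_\zeta^t$, and since the parents lie among the descendants of the root of depth at least $q$, the computation behind the preliminary estimate of this section bounds $E\big[\summ_\zeta l_\zeta^t\big]$ by $\summ_{j\ge q}p^j=p^q/(1-p)$. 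Setting $B'_q=B_q+A\summ_\zeta l_\zeta^t$ gives the assertion with $E[B'_q]\le E[B_q]+A\,p^q/(1-p)<\infty$, a bound uniform in $\tau$.

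I expect the cap term $\Gamma^{(1)}$ to be the main obstacle. The estimate of Lemma~\ref{lemmaorbits} is naturally an estimate for a fixed level of the \emph{whole} tree, and one must transport it, through self-similarity, to the diameter-based stopping rule while simultaneously keeping the expectation of the constant finite and uniform in $\tau$. The offspring bound is precisely what prevents the large part of the cap from proliferating (at most $(10^d)^q$ cells), whereas the factor $p^q$ coming from $t>\dim_H K$ is what makes the deep part $\Gamma^{(2)}$ summable; the delicate point is to obtain both simultaneously, with a single random variable $B'_q$ of finite expectation.
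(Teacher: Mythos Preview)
Your approach is essentially the paper's: split $\Gamma_{\tau,q}$ into the level-$q$ piece (handled by the $B_q$ from Lemma~\ref{lemmaorbits}) and the deeper piece grouped by its large parents $\zeta$ (handled by condition~\eqref{regorbits}), then set $B'_q=B_q+A\cdot(\text{contribution of the parents})$. The only cosmetic difference is in the last estimate: you bound $E\big[\sum_\zeta l_\zeta^t\big]$ directly by $p^q/(1-p)$, while the paper uses $l_\zeta^t\le 1$ and instead bounds $E[\card\{\sigma:l_\sigma\ge 1/5\}]\le 5^t/(1-p)$ via Markov---both give a finite-expectation $B'_q$.
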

\begin{proof}
Let $$\Gamma_{0,\tau,q}=
\{\sigma\in\D^*\colon|\sigma|\ge |\tau|+q, l_\sigma\ge l_\tau/5,
 \exists\tau\in\Gamma_{\tau,q}\colon \tau|_{|\tau|-1}=\sigma\}.$$
Then 
$$N_r(O(x,\Gamma_{\tau,q}))\1_{\{\ov{\gamma}<t\}}\le
N_r(O_\tau(S_\tau(x),q))\1_{\{\ov{\gamma}<t\}}+
\summ_{\sigma\in\Gamma_{0,\tau,q}}N_r(O_\sigma(S_\sigma(x),1))\1_{\{\ov{\gamma}<t\}}$$
$$\le B_q l_\tau^t r^{-t}+Al_\tau^t r^{-t}\card\{\sigma\in\D^*|l_\sigma\ge 1/5\},$$
where $B_q$ and the estimate on the first term come from the
proof of lemma~\ref{lemmaorbits}, and the second term is bounded
according to condition \eqref{regorbits}.

Note that if $0<p=E\left[\summ_{i\in\Delta}T^t_i\right]<1$, then $$E\left[\summ_{|\tau|=q}\frac{l_\tau^t}{(1/5)^t}\right]=
5^tE\left[\summ_{|\tau|=q}l_\tau^t\right]=5^tp^q
\ge E[\card\{\tau|\tau\in\D^q,\ l_\tau\ge 1/5\}].$$
Hence 
$$E[\card\{\sigma\in\D^*|l_\sigma\ge 1/5\}]=\summ_{k=1}^{+\infty}
E[\card\{\sigma\in\D^k|l_\sigma\ge 1/5\}]\le\frac{5^t}{1-p}
$$
and we can put $B'_q=B_q+A\card\{\sigma\in\D^*|l_\sigma\ge 1/5\}.$
\end{proof}

\begin{lemma}\label{lemmamain}
For every $t\in\R$ such that $P(\max\{\dim_H K,\ov{\gamma}\}<t)>0,$
$\udim K\le t$ for a. e. $\w$ such that $\ov{\gamma}(\w)<t$.
\end{lemma}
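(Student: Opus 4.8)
The plan is to cover $K$ at a geometric scale $r$ by an iterated stopping antichain whose cells all have diameter below $r$, and then to estimate the covering number of $K$ through the covering number of the associated orbit, to which Corollary~\ref{hangorbits} applies. Fix $t$ with $P(\max\{\dim_H K,\ov\gamma\}<t)>0$; since $\dim_H K$ is a.s.\ the constant $\alpha$, this forces $\alpha<t$ and hence $p=E[\summ_{i\in\D}T_i^t]<1$. Choose $q\in\N$ so large that $\rho:=p^q/(1-p)<1$. Define $\mathcal A_0=\{\emptyset\}$ and, inductively, $\mathcal A_m=\bigcup_{\tau\in\mathcal A_{m-1}}\Gamma_{\tau,q}$. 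Because $\sup_{|\tau|=k}l_\tau\to0$ a.s., each $\Gamma_{\tau,q}$ is a genuine cut of the subtree below $\tau$, so $\mathcal A_m$ is an antichain with $K\subset\bigcup_{\eta\in\mathcal A_m}J_\eta$ and $l_\eta<5^{-m}$ for every $\eta\in\mathcal A_m$.

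First I would reduce covering $K$ to covering its orbit. Since $S_\eta(x)\in J_\eta$ and $\diam J_\eta<5^{-m}=:r$, each cell $J_\eta$ with $\eta\in\mathcal A_m$ lies in the closed $r$-ball about $S_\eta(x)$; enlarging a radius-$r$ cover of $O(x,\mathcal A_m)$ by $r$ therefore covers $\bigcup_{\eta\in\mathcal A_m}J_\eta\supset K$, giving $N_{2r}(K)\le N_r(O(x,\mathcal A_m))$. Using that $\mathcal A_m$ splits along $\mathcal A_{m-1}$ into the disjoint subtrees of its elements, the orbit decomposes as $O(x,\mathcal A_m)=\bigcup_{\tau\in\mathcal A_{m-1}}O(x,\Gamma_{\tau,q})$, so by subadditivity of $N_r$ and Corollary~\ref{hangorbits} applied to each $\tau$ we obtain, on $\{\ov\gamma<t\}$,
\[
N_r(O(x,\mathcal A_m))\le\summ_{\tau\in\mathcal A_{m-1}}N_r\big(O(x,\Gamma_{\tau,q})\big)\le r^{-t}\summ_{\tau\in\mathcal A_{m-1}}B'_{q,\tau}\,l_\tau^t,
\]
where $B'_{q,\tau}$ is the random variable furnished by Corollary~\ref{hangorbits} for the subtree rooted at $\tau$.

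The core of the argument is to bound the expectation of the right-hand side. Both the event $\{\tau\in\mathcal A_{m-1}\}$ and $l_\tau$ are measurable with respect to the construction down to $\tau$, whereas $B'_{q,\tau}$ depends only on the subtree below $\tau$; by the conditional i.i.d.\ structure they are conditionally independent and $E[B'_{q,\tau}\mid\mathcal F_\tau]=E[B'_q]=:C$. Writing $Z_m=\summ_{\tau\in\mathcal A_m}l_\tau^t$, the same factorization together with the lemma giving $E[\summ_{\eta\in\Gamma}l_\eta^t]\le p^q/(1-p)$, applied inside each $\Gamma_{\tau,q}$, yields $E[Z_m\mid\mathcal F_{\mathcal A_{m-1}}]\le\rho\,Z_{m-1}$, hence $E[Z_m]\le\rho^m$. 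Combining with the previous display, for $r=5^{-m}$ we get $E[N_{2r}(K)\1_{\{\ov\gamma<t\}}]\le C\,r^{-t}\rho^{m-1}=(C/\rho)\,(5^t\rho)^m$.

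Finally, for any $\e>0$, Markov's inequality gives $P\big(N_{2\cdot5^{-m}}(K)\1_{\{\ov\gamma<t\}}>5^{m(t+\e)}\big)\le(C/\rho)\,(\rho\,5^{-\e})^m$, which is summable because $\rho<1$. By the Borel--Cantelli lemma, a.e.\ on $\{\ov\gamma<t\}$ one has $N_{2\cdot5^{-m}}(K)\le 5^{m(t+\e)}$ for all large $m$; passing from this geometric sequence of scales to all $r$ in the usual way gives $\udim K\le t+\e$, and letting $\e\downarrow0$ yields $\udim K\le t$ a.e.\ on $\{\ov\gamma<t\}$. I expect the main obstacle to be the multiscale bookkeeping that converts the single-step estimate of Corollary~\ref{hangorbits} into the geometric decay $E[Z_m]\le\rho^m$: one must check that $\mathcal A_m$ is genuinely a cut covering $K$ with cells of diameter below $5^{-m}$, that the orbit splits cleanly along $\mathcal A_{m-1}$, and that the conditional-independence factorization of $B'_{q,\tau}$ and $Z_m$ is legitimate even though $\mathcal A_{m-1}$ is a random antichain.
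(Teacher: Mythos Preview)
Your argument is correct and takes a genuinely different route from the paper's proof. The paper proceeds by induction on a scale parameter $n$, proving that for every $\tau$ and every $r$ with $1/n\le r/l_\tau\le 1$ one has $N_r(K_\tau)\1_{\{\ov\gamma<t\}}\le B_{\tau,n}\,l_\tau^t r^{-t}$ with $E[B_{\tau,n}]\le B$ uniformly in $n,\tau$. The induction step at scale $1/(n_0+1)$ splits $\Gamma_{\tau,q}$ into cells $C_{\tau,2}$ with $l_\sigma>l_\tau/(2n_0+2)$, to which the induction hypothesis applies because $r/l_\sigma>1/n_0$, and small cells $C_{\tau,1}$, handled via Corollary~\ref{hangorbits}; the choice $p^q<1/2$ makes the recursion for $E[B_{\tau,n}]$ close. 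You instead build the iterated stopping antichains $\mathcal A_m$, reduce $N_{2\cdot5^{-m}}(K)$ to the orbit over $\mathcal A_m$, decompose along $\mathcal A_{m-1}$, and invoke Corollary~\ref{hangorbits} once per piece; the geometric decay $E[Z_m]\le\rho^m$ then replaces the paper's uniform-expectation induction, and Markov plus Borel--Cantelli finishes as in Lemma~\ref{lemmaorbits}. Your approach is more streamlined for the bare statement of the lemma and sidesteps the scale-matching bookkeeping; the paper's approach yields a somewhat stronger intermediate conclusion (the uniform bound on $N_r(K_\tau)$ for all $\tau$ with constants of bounded expectation), though it is not used elsewhere.

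The potential obstacles you flag are real but manageable: the event $\{\tau\in\mathcal A_{m-1}\}$ is determined by the diameters $l_{\tau|_i}$, $1\le i\le|\tau|$, along the path to $\tau$ (the defining conditions for $\Gamma_{\tau',q}$ involve only such diameters), so it lies in the $\sigma$-algebra $\mathcal F_\tau$ the paper singles out; by condition~(vi) the ratios $l_\eta/l_\tau$ for $\eta\succ\tau$, and hence both $B'_{q,\tau}$ and $\sum_{\eta\in\Gamma_{\tau,q}}(l_\eta/l_\tau)^t$, are conditionally independent of $\mathcal F_\tau$ given $J_\tau\ne\emptyset$, which justifies the factorization $E[Z_m]\le\rho\,E[Z_{m-1}]$ via Tonelli over the countable index set. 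One cosmetic point: Corollary~\ref{hangorbits} as written uses a single symbol $B'_q$, but its proof produces (as you correctly read it) a variable depending on the subtree below $\tau$ with common conditional expectation $E[B'_q]$; your notation $B'_{q,\tau}$ makes this explicit.
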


\begin{proof}

Suppose that $P(\max\{\dim_H K,\ov{\gamma}\}<t)>0.$
Let $p\in(0,1)$ be defined by equality $p=E\left[\summ_{i\in\N}l_i^t\right]$.
We will prove by induction on $n$ that there exists $B>0$
such that for each $n$, for every $\tau\in\D^*$ there exists a random
variable $B_{\tau,n}$, independent of the $\sigma$-algebra generated
by the maps $\w\mapsto l_{\tau|_i}(\w)$, $1\le i\le |\tau|$,
with $E\big[B_{\tau,n}]\le B$ such that
$$N_r(K_\tau)\1_{\{\ov{\gamma}<t\}}\le B_{\tau,n} r^{-t}l_\tau^t\text{ for a.e. }\w
\text{ such that } 1/n\le r/l_\tau(w)\le 1.$$

Choose $q\in\N$ such that $p^q<1/2$. Then put $B=\max\{2^d,4^{t+1}E[B_q']\}$,
where $B'_q$ is the random variable from corollary~\ref{hangorbits}.
The induction base obviously holds for $n=1,2$.

Suppose the statement is true for $n_0\in\N,$ and $1/(n_0+1)\le r<1/n_0.$
We can assume that $K_\tau\ne\emptyset$.
Let $$C_{\tau,1}(\w)=\left\lbrace\sigma\in\Gamma_{\tau,q}\,|
\,l_\sigma\le \frac{l_\tau}{2n_0+2}\right\rbrace,
C_{\tau,2}(\w)=\left\lbrace\sigma\in\Gamma_{\tau,q}\,|
\,l_\sigma> \frac{l_\tau}{2n_0+2}\right\rbrace,$$
where $$\Gamma_{\tau,q}=\{\sigma\in\D^{q+|\tau|}\colon
 l_\sigma<l_\tau/5\}\cup\{\sigma\in\D^*\colon
|\sigma|>q+|\tau|, l_\sigma<l_\tau/5, l_{\sigma|_{|\sigma|-1}}\ge l_\tau/5\}.$$
Since $$K_\tau=\Big(\mathop{\bigcup}\limits_{\sigma\in C_{\tau,1}}K_\sigma\Big)
\cup\Big(\mathop{\bigcup}\limits_{\sigma\in C_{\tau,2}}K_\sigma\Big),$$
we have
$$N_r(K_\tau)\le N_{\frac{1}{n_0+1}}\Big(\mathop{\bigcup}
\limits_{\sigma\in C_{\tau,1}}K_\sigma\Big)+\summ_{\sigma\in C_{\tau,2}}N_r(K_\sigma).$$

We note that $N_{\frac{1}{n_0+1}}\Big(\mathop{\bigcup}\limits_{\sigma\in C_{\tau,1}}
K_\sigma\Big)\le N_{\frac{1}{2n_0+2}}(O(x,\Gamma_{\tau,q}))$ because if
$B(y_j, \frac{1}{2n_0+2})$ is a collection of balls of radius $\frac{1}{2n_0+2}$
covering $O(x,\Gamma_{\tau,q}),$ then the balls $B(y_j,\frac{1}{n_0+1})$ cover
$\mathop{\bigcup}\limits_{\sigma\in C_{\tau,1}}K_\sigma,$ since
$\diam(J_\sigma)<\frac{1}{2n_0+2}$ for all $\sigma\in C_{\tau,1}.$ Therefore
by corollary \ref{hangorbits}
$$N_r(K_\tau)\1_{\{\ov{\gamma}<t\}}\le B'_q l_\tau^t 2^t (n_0+1)^t+
\summ_{\sigma\in\Gamma_{\tau,q}}N_r(K_\sigma)
\1_{\{l_\sigma\in C_{\tau,2}\}}\1_{\{\ov{\gamma}<t\}}
\text{ a.s.}$$

The following chain of inequalities ensures applicability of the induction hypothesis
to estimate the terms in the last sum:
$$\frac{r}{l_\sigma}>5r\ge\frac{5}{n_0+1}>\frac{1}{n_0},$$
therefore
$$N_r(K_\sigma)\1_{\{l_\sigma\in C_{\tau,2}\}}\1_{\{\ov{\gamma}<t\}}
\le B_{\sigma,n}r^{-t} l_\sigma(\w)^t\text{ a.s.}$$
Since $r\le 2/(n_0+1),$
$$N_r(K_\tau)\1_{\{\ov{\gamma}<t\}}\le
r^{-t} \left(4^t l_\tau^t B'_q+\summ_{\sigma\in\Gamma_{\tau,q}}
B_{\sigma,n_0} l_\sigma^t\right)=r^{-t}l_\tau^t
\left(4^t B'_q+\summ_{\sigma\in\Gamma_{\tau,q}}B_{\sigma,n_0}
l_\sigma^t/l_\tau^t\right) \text{ a.s.}$$
Note that
\[
E\left[\left(4^t B'_q+\summ_{\sigma\in\Gamma_{\tau,q}} B_{\sigma,n}
l_\sigma^t/l_\tau^t\right)\right]\le 4^t E[B'_q]+B p^q<B/4+B/2<B.
\]
Applying the same argument as in lemma~\ref{lemmaorbits} we come to the desired conclusion.
\end{proof}

\begin{theorem}\label{mink}
If there exists $A>0$ such that for all
$x\in J_\tau,$ $t>0$ and $0<r\le 2$ we have
$$N_r(O_\tau(x,1))\1_{\{\ov{\gamma}_\tau<t\}}<Ar^{-t}l_\tau^t,$$ then
$\udim K=\max\{\dim_H K,\ov{\gamma}\}$ a.s. provided $K\ne\emptyset.$
Similarly, if
$$N_r(O_\tau(x,1))\1_{\{\un{\gamma}_\tau<t\}}<Ar^{-t}l_\tau^t,$$ then
$\udim K=\max\{\dim_H K,\un{\gamma}\}$ a.s. on $\{K\ne\emptyset\}.$
\end{theorem}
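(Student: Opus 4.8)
The plan is to sandwich $\udim K$ between $\max\{\dim_H K,\ov{\gamma}\}$ from above and from below, and then to obtain the second assertion by rerunning the same scheme with the indicator of $\{\un{\gamma}<t\}$ in place of that of $\{\ov{\gamma}<t\}$. The hypothesis of the first assertion is exactly condition~\eqref{regorbits}, so Lemmas~\ref{lemmaorbits}, \ref{lemmamain} and Corollary~\ref{hangorbits} are all at my disposal.

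For the upper bound I would feed the hypothesis into Lemma~\ref{lemmamain} and strip off its conditional phrasing by a rational exhaustion. For each rational $t$ the event $\{\max\{\dim_H K,\ov{\gamma}\}<t\}$ is contained in $\{\ov{\gamma}<t\}$, so whenever it has positive probability Lemma~\ref{lemmamain} yields $\udim K\le t$ for a.e.\ $\w$ in it. Writing $\{\udim K>\max\{\dim_H K,\ov{\gamma}\}\}$ as the countable union over rational $t$ of $\{\max\{\dim_H K,\ov{\gamma}\}<t\}\cap\{\udim K>t\}$, each term is null (by Lemma~\ref{lemmamain} when the first event has positive probability, and trivially otherwise), hence $\udim K\le\max\{\dim_H K,\ov{\gamma}\}$ a.s.

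For the lower bound the inequality $\udim K\ge\dim_H K$ is automatic. Fix $\tau\in\D^*$ with $K_\tau\ne\emptyset$; Lemma~\ref{lemmamink} applies to the first generation of cells inside $J_\tau$, which are non-overlapping copies of $J$. For each offspring cell $J_{\tau*\sigma}$, $|\sigma|=1$, meeting $K$ I select one point of $K$ inside it, forming a set $X_\tau\subset K$ that occupies exactly the cells occupied by the orbit $O_\tau(x,1)$. Lemma~\ref{lemmamink} then gives $\udim X_\tau=\udim O_\tau(x,1)=\ov{\gamma}_\tau$, and monotonicity of $\udim$ under $X_\tau\subset K$ (so $N_r(X_\tau)\le N_r(K)$) yields $\udim K\ge\ov{\gamma}_\tau$ for every $\w$ with $K_\tau\ne\emptyset$. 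Taking the supremum over the countably many $\tau$ gives $\udim K\ge\ov{\gamma}$, so $\udim K\ge\max\{\dim_H K,\ov{\gamma}\}$; combined with the upper bound this proves the first equality on $\{K\ne\emptyset\}$.

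The second assertion is obtained by rerunning Lemmas~\ref{lemmaorbits}, \ref{lemmamain} and Corollary~\ref{hangorbits} verbatim, with every occurrence of $\1_{\{\ov{\gamma}<t\}}$ and $\1_{\{\ov{\gamma}_\tau<t\}}$ replaced by $\1_{\{\un{\gamma}<t\}}$ and $\1_{\{\un{\gamma}_\tau<t\}}$: the second hypothesis supplies precisely the base-case and inductive covering estimates these proofs use, so the same rational exhaustion now gives $\udim K\le\max\{\dim_H K,\un{\gamma}\}$ a.s. Since the lower bound already yields $\udim K\ge\ov{\gamma}\ge\un{\gamma}$ and $\udim K\ge\dim_H K$, the two estimates squeeze $\max\{\dim_H K,\un{\gamma}\}$ and $\max\{\dim_H K,\ov{\gamma}\}$ together and establish the second equality. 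I expect the main difficulty to be bookkeeping rather than a new idea: converting the conditional statement of Lemma~\ref{lemmamain} into an unconditional a.s.\ bound without leaking null sets, and verifying that Lemma~\ref{lemmamink} genuinely applies to the first generation inside $J_\tau$ --- in particular the occupancy-matching requirement that $X_\tau$ and $O_\tau(x,1)$ meet exactly the same cells; the degenerate $\tau$ with $K_\tau=\emptyset$ contribute nothing to the supremum and may be discarded.
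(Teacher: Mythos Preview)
Your proposal is correct and follows the paper's strategy: the upper bound via Lemma~\ref{lemmamain} (your rational exhaustion just spells out what the paper compresses into one line), and the lower bound by embedding into $K$ a set of points occupying the same cells as an orbit and invoking Lemma~\ref{lemmamink}.

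There is one minor difference worth noting. For the lower bound the paper works at level $n$ from the root: it picks one point of $K$ in each $J_\sigma$, $|\sigma|=n$, with $J_\sigma\cap K\ne\emptyset$, obtains $\udim K\ge\udim O(x,n)$, and then invokes Lemma~\ref{lemmaorbits} to pass from $\sup_n\udim O(x,n)$ to $\ov{\gamma}$. You instead work at level $1$ inside each $J_\tau$, getting $\udim K\ge\ov{\gamma}_\tau$ directly and then taking the supremum over $\tau$. Your route is a shade more direct since it bypasses Lemma~\ref{lemmaorbits}. One small bookkeeping point: your restriction to $\tau$ with $K_\tau\ne\emptyset$ is not needed and in fact creates an unjustified claim (``the degenerate $\tau$ with $K_\tau=\emptyset$ contribute nothing''). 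Your selection of $X_\tau$ only requires $J_{\tau*i}\cap K\ne\emptyset$ to pick a point of $K$, so the argument runs unchanged for every $\tau$ with $J_\tau\ne\emptyset$; and when $J_\tau=\emptyset$ the orbit $O_\tau(x,1)$ is empty, so $\ov{\gamma}_\tau$ vanishes and does not affect the supremum. With that adjustment the supremum genuinely equals $\ov{\gamma}$ and the lower bound is clean.
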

\begin{proof}
Fix $n\in\N$ and consider a collection
of points $X=\{x_i\}_{i=1}^\infty\subset K$ such that for all $\sigma\in\N^n,$
$J_\sigma\cap K\ne\emptyset\Rightarrow\card(X\cap J_\sigma)=1$ and
$J_\sigma\cap K=\emptyset\Rightarrow\card(X\cap J_\sigma)=0.$
By lemma~\ref{lemmamink}, $\udim X=\udim O(x,n)$, and therefore
$\udim K\ge\max\{\dim_H K,\sup\limits_{n\in\N}\udim O(x,n)\}.$
By lemma \ref{lemmamain}, $P(\udim K>\max\{\dim_H K,\ov{\gamma}\})=0.$
\end{proof}

\begin{corollary}\label{finite}
If the number of offspring is finite
almost surely, then $\dim_H K=\dim_P K=\ldim K=\udim K$ a.s.
\end{corollary}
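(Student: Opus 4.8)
The plan is to read the corollary off from Theorem~\ref{mink} once I show that finite branching forces the orbit dimensions to vanish, and then to pin down the packing dimension using the elementary inequalities recalled in the Preliminaries. First I would observe that if the number of offspring is finite almost surely, then for each $\tau\in\D^*$ the orbit $O_\tau(x,1)=\bigcup_{|\sigma|=1,\,J_{\tau*\sigma}\cap K\ne\emptyset}S^\tau_\sigma(x)$ is a finite set, since it is indexed by the finitely many non-empty first-level descendants of $J_\tau$. A finite set of points has $\udim=\ldim=0$, so $\ov\gamma_\tau=\un\gamma_\tau=0$ for every fixed $\tau$. As $\D^*$ is countable, intersecting the corresponding almost-sure events yields $\ov\gamma=\sup_\tau\ov\gamma_\tau=0$ and $\un\gamma=\sup_\tau\un\gamma_\tau=0$ almost surely.

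Next I would apply Theorem~\ref{mink}. Its hypothesis is exactly the standing assumption~\eqref{regorbits}, so the theorem is available; note that with $\ov\gamma_\tau=0$ the indicator $\1_{\{\ov\gamma_\tau<t\}}$ is identically $1$ for every $t>0$, which is what makes the assumed bound bite. Hence, on $\{K\ne\emptyset\}$, $\udim K=\max\{\dim_H K,\ov\gamma\}=\max\{\dim_H K,0\}=\dim_H K$ almost surely (using $\dim_H K\ge0$), and the companion statement of the same theorem for the lower Minkowski dimension gives $\ldim K=\max\{\dim_H K,\un\gamma\}=\dim_H K$ almost surely.

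Finally I would close with the general relations from the Preliminaries. One always has $\dim_H K\le\dim_P K$, and from $\dim_P K=\inf\{\sup_i\udim F_i\mid K\subset\cup_i F_i\}$ the trivial cover $F_1=K$ gives $\dim_P K\le\udim K$. Combining this with $\udim K=\dim_H K$ produces the squeeze $\dim_H K\le\dim_P K\le\udim K=\dim_H K$, so $\dim_P K=\dim_H K$; together with $\ldim K=\dim_H K$ this establishes $\dim_H K=\dim_P K=\ldim K=\udim K$ almost surely on $\{K\ne\emptyset\}$. The excluded degenerate case, where $K$ is almost surely empty or a single point, is trivial. The one step deserving explicit care, rather than a deep obstacle, is the passage to $\ov\gamma=\un\gamma=0$: one must make sure the supremum over the countably many $\tau\in\D^*$ does not resurrect positive dimension, which is precisely why the almost-sure events are intersected over the countable tree and why the standing assumption~\eqref{regorbits} is what keeps Theorem~\ref{mink} applicable.
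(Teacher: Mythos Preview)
Your proposal is correct and matches the paper's intended argument: the corollary is stated immediately after Theorem~\ref{mink} with no separate proof, precisely because finite branching makes each orbit $O_\tau(x,1)$ a finite set, so $\ov\gamma_\tau=\un\gamma_\tau=0$ for every $\tau$ and hence $\ov\gamma=\un\gamma=0$ almost surely; Theorem~\ref{mink} (under the standing assumption~\eqref{regorbits}) then gives $\udim K=\ldim K=\dim_H K$, and the squeeze $\dim_H K\le\dim_P K\le\udim K$ finishes the packing dimension.
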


\begin{theorem}\label{packsim}
Suppose that we have a random self-similar set and 
there exists $A>0$ such that $$N_r(O(x,1))<Ar^{-\ov{\gamma}}$$
a.s. for all $0<r\le 2.$ 
Then $\dim_P K=\udim K=\max\{\dim_H K,\esssup\udim O(x,1)\}$ and
$\ldim K=\max\{\dim_H K,\esssup\ldim O(x,1)\}$ a.s. on
$\{K\ne\emptyset\}.$
\end{theorem}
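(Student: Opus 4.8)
The plan is to obtain the statement from Theorem~\ref{mink}, supplemented by two extra ingredients: the identification of $\ov{\gamma}$ and $\un{\gamma}$ with essential suprema of orbit dimensions, and a portion argument that upgrades the upper Minkowski dimension to the packing dimension. First I would verify that the hypothesis of Theorem~\ref{mink} follows from the assumption made here. In a self-similar construction the orbit $O_\tau(x,1)$ is, conditionally on $J_\tau\ne\emptyset$, a copy of $O(x',1)$ scaled by the factor $l_\tau$; since covering numbers transform as $N_r(O_\tau(x,1))=N_{r/l_\tau}(O(x',1))$ and the rescaled configuration has the law of the one at the root, the bound $N_r(O(x,1))<Ar^{-\ov{\gamma}}$ transfers, for all $\tau$ simultaneously off a single null set, to $N_r(O_\tau(x,1))<Ar^{-\ov{\gamma}}l_\tau^{\ov{\gamma}}$ on $0<r\le l_\tau$. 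Because condition \eqref{regorbits} is only ever invoked (in Lemma~\ref{lemmaorbits}, Corollary~\ref{hangorbits} and Lemma~\ref{lemmamain}) on the event $\{\ov{\gamma}<t\}$, hence for exponents $t>\ov{\gamma}\ge\ov{\gamma}_\tau$ and there only for $r\lesssim l_\tau$, on that range $Ar^{-\ov{\gamma}}l_\tau^{\ov{\gamma}}\le Ar^{-t}l_\tau^t$, so the hypothesis of Theorem~\ref{mink} holds; the argument for $\un{\gamma}$ is identical. This gives $\udim K=\max\{\dim_H K,\ov{\gamma}\}$ and $\ldim K=\max\{\dim_H K,\un{\gamma}\}$ a.s. on $\{K\ne\emptyset\}$.

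Next I would identify the two constants. By Lemma~\ref{lemmamink} each $\ov{\gamma}_\tau=\udim O_\tau(x,1)$ is well defined, and by self-similarity together with the scale invariance of $\udim$ the family $\{\ov{\gamma}_\tau\}_{\tau\in\D^*}$ consists of identically distributed copies of $\ov{\gamma}_\emptyset=\udim O(x,1)$, mutually independent along any antichain by the conditional i.i.d. structure. Since each $\ov{\gamma}_\tau\le\esssup\udim O(x,1)$ a.s. and there are only countably many $\tau$, one direction gives $\ov{\gamma}\le\esssup\udim O(x,1)$ a.s.; for the reverse, on $\{K\ne\emptyset\}$ there is a.s. an infinite antichain of surviving cells, and the supremum of the corresponding infinite family of independent copies equals their common essential supremum almost surely. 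Hence $\ov{\gamma}=\esssup\udim O(x,1)$ and likewise $\un{\gamma}=\esssup\ldim O(x,1)$ a.s. As $\dim_H K=\alpha$ is an a.s. constant, both $\ov{\gamma}$ and $\un{\gamma}$, and therefore $\udim K$ and $\ldim K$, are a.s. constant; this already yields the asserted formula for $\ldim K$ and the value of $\udim K$.

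It remains to prove $\dim_P K=\udim K=:s$, which is the heart of the matter. The inequality $\dim_P K\le\udim K$ is automatic, so I would establish the reverse by showing that a.s. every portion of $K$ carries the full box dimension, i.e. $\udim(K\cap V)=s$ for every open $V$ with $K\cap V\ne\emptyset$. Given such a $V$, choose $y\in K\cap V$; since the diameters $l_{\eta|_n}$ of the cells nested at $y$ tend to $0$, some $\tau$ satisfies $y\in J_\tau\subset V$, so $K_\tau\subset K\cap V$. Conditionally on $J_\tau\ne\emptyset$ the set $K_\tau$ is a similar copy of an independent realization of $K$ scaled by $l_\tau$, whence $\udim K_\tau$ has the law of $\udim K$, the a.s. constant $s$; intersecting the countably many a.s. events $\{\udim K_\tau=s\}$ over $\tau\in\D^*$ gives, for a.e. $\w$, that $\udim(K\cap V)\ge\udim K_\tau=s$ for every admissible $V$, while $\udim(K\cap V)\le\udim K=s$ is clear. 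Then, for any countable cover $K\subset\bigcup_i E_i$, replacing each $E_i$ by $\ov{E_i}$ and applying the Baire category theorem to the compact set $\ov{K}$ (on which $\udim\ov{K}=s$ and every portion still has box dimension $s$), some $K\cap\ov{E_i}$ has non-empty interior in $\ov{K}$, forcing $\udim E_i=\udim\ov{E_i}\ge s$; taking the infimum over covers yields $\dim_P K\ge s$.

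The step I expect to be the main obstacle is this final portion-and-category argument. The delicate points are passing from a single open set $V$ to all portions simultaneously off one null set (handled above by the countable intersection over $\tau$), and coping with the possible non-closedness of $K$: one must run the category argument on $\ov{K}$ and check that this does not lose packing dimension, exploiting that $\udim$ is closure invariant whereas $\dim_P$ is only monotone under inclusion.
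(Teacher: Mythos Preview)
Your proposal is correct and follows essentially the same route as the paper: reduce to Theorem~\ref{mink}, identify $\ov{\gamma}$ (and $\un{\gamma}$) with the essential supremum via the conditionally i.i.d.\ structure, then use that every $K_\tau$ has the same a.s.\ constant upper box dimension together with Baire's category theorem to upgrade $\udim$ to $\dim_P$. The one place where the paper is more careful than your sketch is the step ``the supremum over an infinite antichain of surviving cells equals the essential supremum'': since that antichain is itself random, the paper conditions on which $m$ cells at a fixed level are the first non-empty ones and uses $b^m\to 0$, which is exactly what is needed to make your sentence rigorous; conversely, your explicit verification that condition~\eqref{regorbits} is recovered from the hypothesis and your handling of the closure issue in the Baire step are more detailed than the paper's terse ``examination of the proofs'' and one-line appeal to Baire.
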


\begin{proof}
Since for a random self-similar set $\ov{\gamma_\tau},$ $\tau\in\N^*$ are 
conditionally i.i.d., we obtain that if $K(\w)\ne\emptyset,$ then
$\ov{\gamma}=\esssup\udim O(x,1)$ a.s.
To see this, let $z=\esssup\udim O(x,1),$
then $\esssup\ov{\gamma_\tau}\le z$ for all $\tau\in\N^*$
and $\ov{\gamma}=\sup\limits_\tau\ov{\gamma_\tau}\le z$ a.s.
If $z=0$ or $\ov{\gamma_\emptyset}=z$ a.s., we are done.
Otherwise consider $0<y<z$ such that 
$$0<P(\udim O(x,1)\le y)=b<1.$$
For all $\tau\in\N^*,$ $b=P(\ov{\gamma_\tau}\le y|J_\tau\ne\emptyset).$
Now we prove that for every $\e\in(0,1)$
\[
P(\{\forall\tau\,\ov{\gamma_\tau}\le y\}\cap \{K\ne\emptyset\})\le\e P(K\ne\emptyset).
\]
Find $m\in\N$ such that $b^m<\e P(K\ne\emptyset)/2.$ From \cite{MW}
it is known that if $S_k$ denotes the number of non-empty offspring on
level $k,$ then for almost every $\w\in\{K\ne\emptyset\},$
$\lim\limits_{k\to\infty}S_k=\infty$,
and for almost every $\w\in\{K=\emptyset\},$  $\lim\limits_{k\to\infty}S_k=0.$
Therefore we can find $\Omega_0\subset\{K\ne\emptyset\},$ $k_0\in\N$,
and perhaps a bigger $m$ such that
\[
 P(\{K\ne\emptyset\}\setminus \Omega_0)<\e P(K\ne\emptyset)/2\text{ and }
\forall \w\in \Omega_0\ S_{k_0}(w)\ge m.
\]
Next we enumerate somehow all indices of $\Delta^{k_0}$ and fix this enumeration,
then denote all $m$-element subsets of $\Delta^{k_0}$ by $F_i$, $i\in\N$.
For $\w\in \Omega_0$ denote the event, that the first $m$ non-empty sets
$J_\sigma(\w),\ \sigma\in\Delta^{k_0}$, concide with $F_i$, by $\Omega_i$.
Then $\Omega_i$ form a partition of $\Omega_0$ and

\[
P(\{\ov{\gamma}\le y\}\cap \{K\ne\emptyset\}))=
P(\{\ov{\gamma}\le y\}\cap \Omega_0)+
P(\{K\ne\emptyset\}\setminus\Omega_0)\le
\]
\[
\le\sum\limits_i P(\{\ov{\gamma}\le y\}\cap \Omega_i)+\e P(K\ne\emptyset)/2=
\sum\limits_i P(\ov{\gamma}\le y|\Omega_i)P(\Omega_i)+\e P(K\ne\emptyset)/2\le
\]
\[
 \le \sum\limits_i b^m P(\Omega_i)+\e P(K\ne\emptyset)/2\le
\e P(K\ne\emptyset).
\]

Examination of the proofs of lemmas \ref{lemmaorbits}, \ref{lemmamain} and theorem \ref{mink} shows that for every
$\tau\in\N^*,$ $\udim K_\tau=\max\{\dim_H K,\esssup O(x,1)\}$
provided $K_\tau\ne\emptyset.$
Now using Baire's category theorem
we see that for
$t<\max\{\dim_H K,\esssup\udim O(x,1)\},$
${\mathcal P}^t(K)=\infty.$
The result follows.
\end{proof}

What is the packing dimension of infinitely branching random fractals
in general is unknown.

\end{section}

\begin{section}{Examples.}\label{examples}
As we see for a random self-similar set the packing dimension is almost surely
constant even with infinite branching. In the following example we see that
if we drop the condition that the similarity maps
are conditionally independent, packing dimension is no longer a constant.

\begin{example}\label{e1}
Random fractal for which the zero-one law does not hold.
\end{example}

Let $J=[0,1]$ and take $p(\w), \w\in\Omega$ with respect to the uniform distribution
on $[1,2].$ We build a random recursive construction so that on level 1,
the right endpoints of the offspring are the points $1/n^p$, $n\in\N,$ and the length
of the $n$-th subinterval is $V_n=(1/16^n)\inf\limits_{1\le p\le 2}\{1/n^p-1/(n+1)^p\}.$
On all other levels, the offspring are formed
from a scaled copy of $[0,1]$ and its disjoint subintervals of length $V_n$
with right endpoints at $1/n^p,$ $n\in\N.$ 
Obviously, $\summ_{n=1}^\infty V_n^{1/4}<\infty,$ and hence
for each $\w\in\Omega,$ we have $\dim_H K\le 1/4.$ On the other
hand we can use the results from \cite{MU2} to determine that
for each $\w\in\Omega,$ $\dim_P K(\w)=\udim K(\w)=\frac{1}{p(\w)+1}.$
So, the reduction ratios are constant, but random placement of subintervals
gives non-trivial variation of the packing dimension.

\begin{example}\label{e2}
Random recursive construction for which $\udim K$ is a non-degenerate
random variable and $\dim_H K< \dim_P K<\essinf\udim K$ a.s.
\end{example}

Note that for $p>0$, $\udim\{1/n^p, n\in\N\}={1/(p+1)}.$
Let $J=[0,1]$ and take $p$ with respect to the uniform distribution
on $[1,2].$ We build a random recursive construction so that on level 1,
the right endpoints of offspring are the points $1/n^p$, $n\in\N.$
On all other levels, the offspring are formed from a scaled copy
of $[0,1]$ and its disjoint subintervals with right endpoints at $1/n^4,$ $n\in\N.$
Let $(V_1, V_2, \dots)$ be a fixed vector of reduction ratios so that
$V_n=(1/1024)^n\inf\limits_{1\le p\le 4}\{1/i^p-1/(i+1)^p\}.$
Then $\summ_{n=1}^\infty V_n^{1/8}<1,$ $K(\w)\ne\emptyset,$ 
$\dim_H K\le 1/8$ and $\udim K=\max\{\dim_H K, 1/(p+1)\}=1/(p+1)$,
where $p$ is chosen according to the uniform distribution on
$[1,2].$ Hence, $\essinf\udim K=1/3.$ By theorem~\ref{packsim},
$\dim_P K=1/5$.
\end{section}


\begin{thebibliography}{9}

\frenchspacing
\bibitem{BHS1} M.~F.~Barnsley, J.~E.~Hutchinson and \"O~Stenflo.
$V$-variable fractals: Fractals with partial self similarity.
{\sl  Adv. Math. \bfseries 218}, No. 6 (2008), pp. 2051--2088.
\bibitem{BHS2} M.~F.~Barnsley, J.~E.~Hutchinson and \"O~Stenflo.
$V$-variable fractals: dimension results.
{\sl Forum Math. \bf 24} (2012), pp. 445--470.
\bibitem{BS2} D.~Beliaev and S.~Smirnov. Random conformal snowflakes.
{\sl Ann. Math. \bf 172}, No. 1 (2010), pp. 597--615.
\bibitem{BM} A.~Berlinkov and R.~D.~Mauldin. Packing measure and dimension
of random fractals. {\sl J. Theoret. Probab. \bf 15}, No. 3 (2002), pp. 695--713.
\bibitem{F} K.~J.~Falconer. Random Fractals.
{\sl Math. Proc. Camb. Philos. Soc. \bf 100} (1986), pp. 559--582.
\bibitem{F1} K.~J.~Falconer. Fractal Geometry. Mathematical Foundations and Applications.
John Wiley \& Sons, Chichester, UK, 2003. 
\bibitem{F2} K.~J.~Falconer. Techniques in Fractal Geometry.
John Wiley \& Sons, Chichester, UK, 1997.
\bibitem{Fr} J.~M.~Fraser. Dimensions and measure for typical random fractals.
{\sl Arxiv preprint arXiv:1112.4541} (2011).
\bibitem{G} S.~Graf. Statistically self-similar fractals.
{\sl Prob. Th. Rel. Fields \bf 74} (1987), pp. 357--392.
\bibitem{GMW} S.~Graf, R.~D.~Mauldin and S.~C.~Williams.
The exact Hausdorff dimension in random recursive constructions.
{\sl Mem. Am. Math. Soc. \bf381}  (1988).
\bibitem{M} P.~Mattila. Geometry of sets and measures in
Euclidean spaces. Cambridge University Press, Cambridge, UK, 1995.
\bibitem{MM} P.~Mattila and R.~D.~Mauldin. Measure and dimension functions:
measurability and densities. {\sl Math. Proc. Camb. Phil. Soc. \bf 121} (1997),
pp. 81--100. 
\bibitem{MU1} R.~D.~Mauldin and  M.~Urbanski. Dimensions
and measures in iterated function systems. {\sl Proc.
London Math. Soc. \bf 73}, No. 3 (1996), pp. 105--154.
\bibitem{MU2} R.~D.~Mauldin and  M.~Urbanski. Conformal iterated
function systems with applications to the geometry of continued
fractions. {\sl Trans. Am. Math. Soc. \bf351} (1999), pp. 4995--5025.
\bibitem{MW} R.~D.~Mauldin and S.~C.~Williams. Random recursive 
constructions: asymtotic geometric and topological properties.
{\sl Trans. Am. Math. Soc. \bf 295} (1986), pp. 325--346.
\bibitem{SX} N.-R.~Shieh and Y.~Xiao. Hausdorff and packing dimensions
of the images of random fields. {\sl Bernoulli, \bf 16}, No. 4 (2010), pp. 926--952.
\end{thebibliography}
\end{document}